\newcommand{\T}{\mathfrak{T}}
\newcommand{\e}{\epsilon}
\renewcommand{\to}{\rightarrow}
\newcommand{\Z}{\mathbb{Z}}
\newcommand{\C}{\mathbb{C}}
\newcommand{\R}{\mathbb{R}}
\newcommand{\del}{\partial}
\renewcommand{\P}{\mathbb{P}}
\newcommand{\sub}{\subset}
\newcommand{\inv}{^{-1}}
\newcommand{\wt}{\widetilde}
\renewcommand{\to}{\rightarrow}
\renewcommand{\P}{\mathbb{P}}
\newcommand{\ol}{\overline}
\newcommand{\im}{\normalfont\text{im}}
\DeclareMathOperator{\newt}{Newt}
\newcommand{\spn}[1]{\text{span} \langle {#1} \rangle }
\newcommand{\Conv}{\normalfont\text{Conv}}
\newcommand{\diff}{\normalfont\text{Diff}}
\newcommand{\hkra}{\hookrightarrow}
\newcommand{\set}[1]{\left\{ #1 \right\}}
\newcommand{\cc}[1]{\overline{#1}}
\newcommand{\sm}{\ensuremath{\setminus}}
\newcommand{\s}[2]{\sum\limits_{#1}{#2} }
\newcommand{\cl}{\colon}
\newcommand{\lbr}[1]{\Bigl(#1\Bigr)}
\newcommand{\lspan}[1]{\langle {#1}\rangle}
\newcommand{\lc}{\underline}
\newcommand{\bC}{\mathbb{C}}
\newcommand{\bD}{\mathbb{D}}
\newcommand{\bP}{\mathbb{P}}
\newcommand{\bQ}{\mathbb{Q}}
\newcommand{\bR}{\mathbb{R}}
\newcommand{\bT}{\mathbb{T}}
\newcommand{\bZ}{\mathbb{Z}}
\newcommand{\cP}{\mathcal{P}}
\newcommand{\fT}{\mathfrak{T}}
\newcommand{\abc}{{(a,b,c)}}
\newcommand{\abcprime}{{(a',b',c')}}
\newenvironment*{prooflemma*}{\paragraph{Proof:}}{}
\newtheorem{Theorem}{Theorem}[section]
\newtheorem{lemma}[Theorem]{Lemma}
\newtheorem{prop}[Theorem]{Proposition}
\newtheorem{cor}[Theorem]{Corollary}
\theoremstyle{definition}
\newtheorem{remark}[Theorem]{Remark}
\newtheorem{defn}[Theorem]{Definition}
\newtheorem{example}[Theorem]{Example}
\numberwithin{equation}{subsection}
\newcommand{\Addresses}{{
  \bigskip
  \footnotesize

  \textsc{Soham Chanda, Rutgers University}\\
 \indent\textit{E-mail address}: \texttt{sc1929@math.rutgers.edu}\par
  \indent\textsc{Amanda Hirschi, University of Cambridge}\\
  \indent\textit{E-mail address}: \texttt{aj616@cam.ac.uk}\par
    \textsc{Luya Wang, UC Berkeley}\\
   \indent \textit{E-mail address}: \texttt{luyawang@math.berkeley.edu}

}}
\title{Infinitely many monotone Lagrangian tori in higher projective spaces}
\author{Soham Chanda, Amanda Hirschi, Luya Wang}
\begin{document}

\maketitle
\begin{abstract}
   In \cite{Via16}, Vianna constructed infinitely many exotic Lagrangian tori in $\bP^2$. We lift these tori to higher-dimensional projective spaces and show that they remain non-symplectomorphic. Our proof is elementary except for an application of the wall-crossing formula of \cite{PT20}.
\end{abstract}

\tableofcontents
\setlength{\parskip}{0.5em}

\section{Introduction}
Given a symplectic manifold $(M,\omega)$, we are interested in studying its monotone Lagrangian tori up to symplectomorphisms of $M$. Here, \emph{monotone} means that the area homomorphism $\sigma: \pi_2(M, L) \to \R$ given by $[D]\mapsto \int_D \omega$ is a positive scalar of the Maslov homomorphism $\mu: \pi_2(M, L) \to \Z$ defined in \cite{Arnold}. We call a Lagrangian torus in $\P^n$ \emph{exotic} if it is monotone and not symplectomorphic to the standard Clifford torus. The earliest example of an exotic torus dates back to \cite{Chekanov1996}.

Recently, the use of almost toric fibrations has become an important tool in constructing new examples of Lagrangian tori. For example, Vianna has constructed infinitely many exotic tori in $\P^2$ \cite{Via16} and in del Pezzo surfaces \cite{Via17}. For more details on almost toric fibrations, see \cite{Symington, Leung_Symington, Evans_ATF}. For previous constructions of non-Hamiltonian isotopic Lagrangian tori in higher dimensions, see for example \cite{Auroux_Lagrangians, PT20} and \cite{Brendel}. 

 Our exotic examples are lifts $\ol T_\abc$ of the Vianna tori $T_\abc$ in $\P^2$. Here $\abc$ is a triple of natural numbers satisfying the Markov equation $a^2+b^2+c^2 = 3abc$. These tori are defined and reviewed in \textsection \ref{sec:lifting_vianna_tori}. Any exotic torus $T_\abc$ can be obtained from the Clifford torus in $\bP^2$ by a sequence of mutations. We show that the lifted Vianna tori can be obtained from the Clifford torus in $\bP^n$ by a sequence of \emph{solid mutations}, a generalisation of mutations to higher dimension. We study how the disk potentials change under a solid mutation, using as essential input a wall-crossing formula from \cite{PT20}. Similar to \cite{Via16}, we do not determine the disk potential explicitly. Instead, we show that the associated Newton polytope, defined in \textsection\ref{subsec:newt}, is uniquely determined by $\abc$.

\begin{Theorem}\label{thm:newt-pol-differ}
    The Newton polytope of the disk potential of $\ol T_\abc$ is a nondegenerate simplex in $\bR^n$. One $2$-dimensional face is a triangle with affine edge lengths $a$, $b$ and $c$, while the affine length of any other edge is $1$. In particular, if $\{a,b,c \} \neq \{a',b',c'\}$, then there is no symplectomorphism of $\P^n$ that maps the lifted Vianna torus  $\ol T_{\abc}$ to $\ol T_\abcprime$. 
\end{Theorem}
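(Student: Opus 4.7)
The plan is to prove the structural statement about the Newton polytope by induction on the combinatorial distance of $\abc$ from $(1,1,1)$ in the Markov mutation graph, and then to deduce the distinguishability statement from the fact that the Newton polytope of the disk potential of a monotone Lagrangian is a symplectomorphism invariant up to $\GL(n,\Z)\ltimes\Z^n$. The base case is the lifted Clifford torus in $\bP^n$, whose disk potential is the standard Laurent polynomial $x_1+\cdots+x_n+(x_1\cdots x_n)^{-1}$ with Newton polytope the standard $n$-simplex, and whose edges all have affine length $1$; this matches the Markov triple $(1,1,1)$, for which the designated triangular face is degenerate with all sides of length $1$.

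For the inductive step, suppose $\ol T_\abc$ has Newton polytope $P$ equal to a non-degenerate $n$-simplex with a distinguished triangular $2$-face $\Delta$ of affine edge lengths $a,b,c$ and every other edge of length $1$. A neighbouring Markov triple $(a,b,c')$ with $c'=3ab-c$ is obtained by a Vianna mutation of $T_\abc$ in $\bP^2$ across the side of length $c$, and this lifts to a solid mutation of $\ol T_\abc$ in $\bP^n$. Under this solid mutation the disk potential transforms by the wall-crossing substitution of \cite{PT20}, which is the essential non-elementary input. The key observation to check is that this substitution is supported on the $\Z^2$-subfactor of $H_1(T^n;\Z)$ spanned by the mutation plane and acts trivially on a complementary $\Z^{n-2}$; consequently the face $\Delta$ transforms exactly as in Vianna's two-dimensional analysis into a triangle with sides $(a,b,c')$, while every edge of $P$ transverse to $\Delta$ continues to have affine length $1$. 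It remains to verify that the vertex of $P$ opposite $\Delta$ is replaced by a single new vertex so that the new polytope is again a non-degenerate $n$-simplex of the required type.

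The main obstacle will be to execute this last verification: showing that after applying the wall-crossing substitution to the disk potential, the resulting Newton polytope has no spurious vertices or long edges outside the mutation plane, and remains non-degenerate. This is purely convex-geometric once the substitution is written explicitly in a frame adapted to $\Delta$, and reduces to examining how the terms of $W_{\ol T_\abc}$ are repackaged. Since the substitution is a monomial change of variables composed with a wall-crossing factor supported on the $\Z^2$-plane, the work amounts to a careful reading of Vianna's $\bP^2$ calculation augmented by the trivial action on the transverse directions.

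Finally, a symplectomorphism $\phi:\bP^n\to\bP^n$ with $\phi(\ol T_\abc)=\ol T_\abcprime$ induces an isomorphism $\phi_*:H_1(\ol T_\abc;\Z)\to H_1(\ol T_\abcprime;\Z)$ identifying their disk potentials up to this $\GL(n,\Z)$-affine change of variables, which preserves the multiset of affine edge lengths of the Newton polytope. By the structural part this multiset consists of $a,b,c$ together with a fixed number of $1$'s depending only on $n$, so it determines the unordered triple $\{a,b,c\}$; hence $\{a,b,c\}=\{a',b',c'\}$, completing the proof.
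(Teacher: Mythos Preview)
Your overall strategy matches the paper's: induction along the Markov tree with base case the Clifford torus, inductive step driven by the wall-crossing formula of Pascaleff--Tonkonog, and the final clause deduced from $\GL(n,\Z)$-invariance of the Newton polytope. The gap is precisely where you flag ``the main obstacle'', and it is not the routine convex-geometric check you describe.

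Knowing only that $\newt(W_{\ol T_\abc})$ is a simplex of the stated shape is \emph{not} a strong enough inductive hypothesis. The substitution $(x,y,z)\mapsto(x,y(1+x),z)$ does act trivially on the $z$-variables, but the Newton polytope of the mutated Laurent polynomial is not determined by the Newton polytope of the original: it depends on the actual coefficients, in particular on how $(1+x)^M$ divides the top $y$-slice. The paper therefore carries a strictly stronger hypothesis through the induction, namely that
\[
W_{\ol T_\abc}(x,y,1,\dots,1)=W_{T_\abc}(x,y)+(n-2)
\]
and that $\newt(W_{\ol T_\abc}-\sum_i z_i)$ is itself a triangle. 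Combining this with the binomial structure of $W_{T_\abc}$ along the edges of its Newton triangle (which the paper proves separately) and a linearity lemma forces the explicit form
\[
W_{\ol T_\abc}(x,y,z)=\sum_{i=1}^{n-2} z_i+\sum_{i=m}^{M} y^{i}\,z_1^{f_1(i)}\cdots z_{n-2}^{f_{n-2}(i)}\,C_i(x),
\]
where $W_{T_\abc}(x,y)=\sum_i y^i C_i(x)$ and each $f_r$ is affine-linear in $i$. Only with this form in hand does the wall-crossing visibly act exactly as in Vianna's two-dimensional computation on the $C_i$, leaving the $z$-structure intact; that is what guarantees the mutated polytope is again a simplex with the correct edge lengths. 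Your sketch neither strengthens the hypothesis in this way nor supplies an alternative mechanism to control the $z$-exponents, so the inductive step as written does not close.

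A smaller point: you speak of ``the vertex of $P$ opposite $\Delta$'' being replaced. For $n>3$ there are $n-2$ vertices opposite $\Delta$, namely $e_3,\dots,e_n$, and these are \emph{fixed} by the mutation since the $z_i$ are untouched; it is one vertex of $\Delta$ that moves.
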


\begin{cor}\label{cor:exotic} For any $n \geq 3$, $\bP^n$ admits infinitely many distinct exotic Lagrangian tori. 
\end{cor}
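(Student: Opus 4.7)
The plan is to deduce the corollary directly from Theorem \ref{thm:newt-pol-differ} together with the classical infinitude of Markov triples. Recall that the Markov equation $a^2+b^2+c^2 = 3abc$ is preserved by the Vieta mutations $(a,b,c)\mapsto (3bc-a,b,c)$ and their coordinate permutations, so starting from $(1,1,1)$ one obtains an infinite Markov tree of solutions. As a first bookkeeping step, I would note that the set of unordered triples $\{a,b,c\}$ arising in this tree is infinite: along the branch $(1,1,1)\to(1,1,2)\to(1,2,5)\to(1,5,13)\to\cdots$, the maximum entry strictly increases at every mutation, so no two of these triples coincide as sets, and in particular infinitely many distinct unordered Markov triples exist.

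Next I would invoke the construction of Section \ref{sec:lifting_vianna_tori} to record two facts about the lifted tori: each $\ol T_\abc\subset\bP^n$ is monotone, and $\ol T_{(1,1,1)}$ is (symplectomorphic to) the standard Clifford torus, whose disk-potential Newton polytope is the standard $n$-simplex with all edges of affine length $1$. Theorem \ref{thm:newt-pol-differ} applied to any nontrivial unordered Markov triple $\{a,b,c\}\neq\{1,1,1\}$ then exhibits a $2$-face of the Newton polytope of $\ol T_\abc$ carrying an edge of affine length strictly greater than $1$; this is incompatible with the standard simplex, so the invariance statement of the theorem (taking $\{a',b',c'\}=\{1,1,1\}$) forces $\ol T_\abc$ to be non-symplectomorphic to the Clifford torus and hence exotic. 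The same theorem applied to any two distinct unordered Markov triples further shows the corresponding lifted Vianna tori are pairwise non-symplectomorphic, so the infinitude established in the first paragraph yields infinitely many distinct exotic Lagrangian tori in $\bP^n$.

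There is no real obstacle once Theorem \ref{thm:newt-pol-differ} is granted: the corollary is essentially an index-set argument whose two inputs — the infinitude of unordered Markov triples and the identification of the Clifford torus with $\ol T_{(1,1,1)}$ — are both routine, the latter being recorded already in Section \ref{sec:lifting_vianna_tori}. The entire substantive content therefore lies upstream, in the Newton-polytope computation of Theorem \ref{thm:newt-pol-differ} and in the wall-crossing formula from \cite{PT20} that feeds into it.
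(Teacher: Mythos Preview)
Your proposal is correct and matches the paper's approach: the corollary is stated without proof as an immediate consequence of Theorem \ref{thm:newt-pol-differ}, together with the monotonicity from Lemma \ref{lem:mutation-lifts}, the identification of $\ol T_{(1,1,1)}$ with the Clifford torus, and the infinitude of unordered Markov triples. You have simply spelled out the implicit reasoning.
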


\begin{remark}
    The upcoming work \cite{DTVW} also finds infinitely many non-symplectomorphic monotone Lagrangian tori in complex projective spaces, using a different method.
\end{remark}

\subsection*{Acknowledgements} S.C. thanks Chris Woodward, Yuhan Sun, Dennis Auroux and Georgios Dimitroglou Rizell for valuable discussions. A.H. is grateful to her advisor Ailsa Keating and Jeff Hicks for valuable discussions and explanations and Noah Porcelli for his patience. L.W. thanks Jo\'{e} Brendel for helpful conversations.

The authors met at the PIMS summer school on Floer homotopy theory and while this project does not directly further that topic, we want to thank the organisers and the institute for that opportunity. 
A.H. is supported by an EPSRC scholarship. L.W. acknowledges support by the NSF GRFP under Grant DGE 2146752.
\section{Geometric Preliminaries}

In this section we define the necessary geometric constructions in order to apply \cite[Theorem 1.1]{PT20} in \textsection\ref{sec:wall-cross}. We introduce the notion of a solid mutation configuration and solid mutations, generalising mutations of a $2$-dimensional Lagrangian torus along a disk to higher dimensions. Subsequently, we define the lifts of the Vianna tori and show that they are related by solid mutations.

\subsection{Solid mutation configurations}\label{subsec:smc} We generalise the results of \cite[\textsection4.4, \textsection4.5]{PT20} to higher dimensions. Compare with \cite[\textsection5.3]{PT20}, where the ambient manifolds are required to be toric. In particular, the definition of solid mutation matches with the definition of higher mutation in \cite{PT20} with mutation configuration $(F,w)$ where $F$ is an $n-1$ dimensional face of a moment polytope.

\begin{defn}\label{def:solid-mut-config} Let $(M,\omega)$ be a symplectic manifold. A pair $(L,\T)$ is a \emph{solid mutation configuration (SMC)} in $M$ if 
    \begin{itemize}
        \item $L$ is a Lagrangian torus;
        \item $\T$ is a Lagrangian solid torus, i.e. $\T$ is diffeomorphic to $\bD^2 \times \bT^{n-2}$;
        \item $L$ and $\T$ intersect cleanly along the boundary of $\T$;
        \item\label{rigid-torus} the inclusion $\fT\cap L \hookrightarrow L$ is (diffeomorphic to) an embedding of Lie groups.
    \end{itemize}
\end{defn}

Here two submanifolds $N_0$ and $N_1$ of $M$ \emph{intersect cleanly} if $K = N_0\cap N_1$ is a smooth submanifold of $M$ and $T_xK = T_xN_0 \cap T_xN_1$ for any $x \in K$.

Let us make the following observation.

\begin{lemma}\label{lemm:dondivisor} Suppose $(L,\fT)$ is a SMC in $(M,\omega)$ and $L$ is monotone. Then there exists a divisor $D\sub X\sm (L\cup \bT)$ Poincar\'e dual to $dc_1(X)$ for some $d \gg 1$, so that $L$ is exact.
\end{lemma}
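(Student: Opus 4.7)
\begin{sketch}{Lemma \ref{lemm:dondivisor}}

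The strategy is standard: produce a Donaldson-type symplectic hypersurface $D$ in class $dc_1(M)$ that avoids the Lagrangian pair $L\cup\fT$, and then use monotonicity to deduce exactness in the complement.

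For the construction of $D$, I would combine Auroux's extension of Donaldson's theorem, which produces for every $d\gg 1$ a smooth symplectic hypersurface Poincar\'e dual to $dc_1(M)$ as the zero set of an asymptotically holomorphic section of a line bundle with first Chern class $c_1(M)$ raised to the $d$-th power, with the Cieliebak--Mohnke refinement, which arranges the defining section to have modulus bounded below on any prescribed closed isotropic subset. Applying this to the isotropic strata of $L\cup\fT$ (both $L$ and the interior of $\fT$ are Lagrangian, and $\partial\fT\subset L$ is isotropic) one obtains $D\subset M\sm(L\cup\fT)$.

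For the exactness, $D$ being Poincar\'e dual to $dc_1(M)$ forces $c_1(M)$ to vanish rationally in $H^2(M\sm D)$. For a disk $u$ representing a class in $\pi_2(M\sm D,L)$ the intersection number $u\cdot D$ vanishes, and by the standard intersection-theoretic identification of the Maslov index of a disk with twice the pairing of $c_1(M)$ with a suitable cap, this intersection number is $\tfrac{d}{2}\mu(u)$. Combined with the monotonicity relation $[\omega]=\lambda\mu$ on $\pi_2(M,L)$ this yields $\int_u\omega=0$ for every such $u$, which is the claimed exactness of $L$ in $M\sm D$.

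The main technical obstacle will be the simultaneous disjointness from $L$ and from $\fT$ in the Donaldson step: the standard Cieliebak--Mohnke-type arguments are formulated for closed Lagrangians, whereas $\fT$ is a Lagrangian manifold \emph{with boundary} clean-intersecting $L$ along $\partial\fT$. I expect to resolve this by constructing asymptotically holomorphic peak sections in a Weinstein-type neighbourhood of the stratified Lagrangian $L\cup\fT$, concentrated away from both strata simultaneously, and then patching with a global Auroux section in class $dc_1(M)$.

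\end{sketch}
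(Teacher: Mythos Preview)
Your sketch is correct and follows the same underlying route as the paper, which simply invokes \cite[Theorem~3.3]{PT20} (the Donaldson--Auroux--Mohsen/Charest--Woodward construction of a hypersurface in class $dc_1$ avoiding a prescribed Lagrangian skeleton, together with the monotonicity-to-exactness deduction) and observes that the higher-dimensional case is immediate. The technical obstacle you flag---that $\fT$ is a Lagrangian \emph{with boundary} cleanly meeting $L$---is precisely what the cited result already accommodates, so rather than redoing the peak-section patching you may simply quote it.

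One small point on your exactness paragraph: showing $\int_u\omega=0$ for disks $u$ in $\pi_2(M\sm D,L)$ is not quite the same as exhibiting a primitive $\theta$ of $\omega|_{M\sm D}$ with $\theta|_L$ exact. You should first note that monotonicity of the ambient manifold gives $[\omega]=\lambda c_1(M)$, hence $[\omega]|_{M\sm D}=0$ rationally and a primitive $\theta$ exists; then the Maslov computation shows $[\theta|_L]$ vanishes on the image of $\partial\cl\pi_2(M\sm D,L)\to H_1(L;\bR)$, and one checks this image is all of $H_1(L;\bR)$ (e.g.\ because the meridian of $D$ together with loops in $L$ generate $H_1(M\sm D)$ when $M$ is simply connected). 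This is routine but worth saying explicitly if you are not citing \cite{PT20}.
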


\begin{proof} By \cite[Theorem 3.3]{PT20}, this higher dimensional analogue of \cite[Corollary 3.4]{PT20} is immediate.\end{proof}

As in \cite{PT20} we will construct a model neighbourhood for SMCs, which will allows us to define solid mutations. The key ingredient is a Weinstein neighbourhood theorem for SMCs (compare to \cite[Lemma 4.11]{PT20}), for which we need the following technical result.

\begin{lemma}\label{extend-diffeo} Suppose $\psi \cl [\frac12,1]\times\bT^n \to [\frac12,1]\times\bT^n$ is a diffeomorphism with $\psi(1,x) = (1,x)$ for $x \in \bT^n$. Then there exists $\epsilon >0$ and a diffeomorphism $\Psi \cl \bD\times \bT^{n-1}\to \bD\times\bT^{n-1}$ which agrees with $\psi$ on $[1-\epsilon,1]\times\bT^n$. If $\psi$ is equivariant with respect to a torus action on $\bT^n$, then we can choose $\Psi$ to be equivariant as well. 
\end{lemma}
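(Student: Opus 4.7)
The plan is to reformulate the extension as a collar neighborhood uniqueness problem. Identify $[\tfrac{1}{2},1] \times \bT^n$ with a closed collar of $\partial(\bD \times \bT^{n-1})$ via the polar-coordinate embedding $(t,\theta,y) \mapsto (t\,e^{i\theta},y)$, so that $\{1\} \times \bT^n$ becomes $\partial \bD \times \bT^{n-1}$ and $t$ is the radial parameter. Because $\psi(1,\cdot) = \mathrm{id}$, the two smooth maps
\begin{equation*}
c_0(x,s) := (1-s,x), \qquad c_1(x,s) := \psi(1-s,x)
\end{equation*}
define two smooth collars of $\partial(\bD \times \bT^{n-1})$ in $\bD \times \bT^{n-1}$ that agree at $s = 0$.

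Next, apply the strong form of the collar neighborhood uniqueness theorem (see e.g.\ Hirsch, \emph{Differential Topology}, \S 4.5): given two smooth collars of the boundary of a manifold which agree on the boundary, there exists an ambient isotopy of the manifold, fixing the boundary pointwise, whose time-$1$ map carries one collar to the other exactly on a smaller subcollar. Applied to $c_0, c_1$, this yields $\epsilon > 0$ and a diffeomorphism $\Psi$ of $\bD \times \bT^{n-1}$ with $\Psi \circ c_0 = c_1$ on $\bT^n \times [0,\epsilon)$. Unwinding the identification, this says $\Psi(t,x) = \psi(t,x)$ for all $(t,x) \in [1-\epsilon,1] \times \bT^n$, as required. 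For the equivariant statement, assume that the acting torus $T$ preserves the splitting $\bT^n = S^1 \times \bT^{n-1}$ and extend the action to $\bD \times \bT^{n-1}$ by letting the $S^1$-factor rotate $\bD$; both $c_0$ and $c_1$ are then $T$-equivariant collars, and the $T$-equivariant form of the same theorem (valid for compact $T$ by averaging arguments within the proof) produces a $T$-equivariant $\Psi$.

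The main technical point is the strong form of collar uniqueness, which gives $\Psi = \psi$ \emph{exactly} on a nontrivial subcollar rather than only after reparameterization in the $s$-direction. It is established by a Moser-type argument: in a local product chart (equivalently, after passing to the universal cover of the torus factor), linear interpolation $c_s := (1-s)c_0 + s c_1$ defines a smooth family of collars for $s \in [0,1]$; one then solves for a time-dependent vector field $V_s$ on $\bD \times \bT^{n-1}$, vanishing on $\partial(\bD \times \bT^{n-1})$, satisfying $V_s \circ c_s = \partial_s c_s$, cuts $V_s$ off outside a smaller subcollar with a bump function, and integrates to produce the ambient isotopy. In the equivariant case, equivariance is preserved at each step by averaging $V_s$ over the compact torus $T$.
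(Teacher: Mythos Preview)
Your approach is correct and takes a genuinely different route from the paper's. You recast the problem as collar uniqueness: the standard radial collar $c_0$ and the collar $c_1(x,s) = \psi(1-s,x)$ are two collars of $\partial(\bD\times\bT^{n-1})$ agreeing at $s=0$, and the strong form of collar uniqueness yields an ambient diffeomorphism $\Psi$ with $\Psi\circ c_0 = c_1$ on a subcollar, which unwinds to $\Psi = \psi$ near the boundary. The paper instead builds $\Psi$ explicitly by hand: it writes $\psi = (\psi',\psi'')$, observes that $\psi_r := \psi''(r,\cdot)\in\diff(\bT^n)$ is isotopic to the identity for $r$ near $1$, and then uses a sequence of carefully chosen cutoff functions to interpolate from $\psi$ near the boundary, through a product-like map $(t,x)\mapsto (th(x),\varphi(x))$, down to $(t,x)\mapsto (at,\varphi_{\chi(t)}(x))$ near the core, where $\varphi_s$ is the isotopy from $\ide$ to $\varphi$. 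Your argument is cleaner and more conceptual but leans on a reference; the paper's is longer but self-contained and elementary, with every step an explicit formula. Two small remarks on your sketch: the interpolation $c_u = (1-u)c_0 + uc_1$ (in the universal cover) is only guaranteed to be an embedding after restricting the collar parameter to a small interval where $c_0$ and $c_1$ are $C^1$-close, so this restriction should precede solving for $V_u$; and your equivariant statement adds the hypothesis that $T$ preserves the splitting $\bT^n = S^1\times\bT^{n-1}$, which is not explicit in the lemma but is equally implicit in the paper's construction (the formula $(t,x)\mapsto(ta,\varphi_{\chi(t)}(x))$ only descends to $\bD\times\bT^{n-1}$ under the same assumption) and holds in the intended application.
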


\begin{proof} Write $\psi = (\psi',\psi'')$ and define $\psi_r(x) := \psi''(r,x)$ for $x \in \bT^n$. Then there is $0 <\epsilon <\frac19$ so that $\psi_r$ is a diffeomorphism for $|1-r|< 3\epsilon$. In particular, $\psi_r$ defines an (equivariant) isotopy from $\psi_{1-2\epsilon}$ to the identity. 
Let $\rho\cl [0,1]\to [0,1]$ be a smooth cutoff function with $\rho(t) = t$ for $t\geq 1-\epsilon$, $\rho\equiv 1-2\epsilon$ on $[0,1-2\epsilon]$ and $\rho'(t) > 0$ on $(1-2\epsilon,1]$. Similarly, let $\beta\cl[\frac12,1]\to [0,1]$ be a smooth cutoff function so that $\beta(t) = \frac12t$ near $\frac12$, $\beta < \frac 12$ on $[\frac12,1-2\epsilon]$, $\beta \equiv 1$ on $[1-\epsilon,1]$ and $\beta$ is strictly increasing on $[1-2\epsilon,1-\epsilon]$. Define $\psi^{(1)}\cl [\frac12,1]\times \bT^n\to (0,1]\times \bT^n$
by $$\psi^{(1)}(t,x) = (\beta(t)\psi'(\rho(t),x),\psi''(\rho(t),x)).$$ 
This is a diffeomorphism by the choice of $\rho$ and $\beta$ and agrees with $\psi$ near $\{1\}\times \bT^n$.\par
It suffices thus to extend the closed embedding 
$$\psi^{(1)}\cl [\frac12,r]\times \bT^n \to [0,1]\times\bT^n\sm \psi^{(1)}\lbr{(r,1])\times \bT^n}$$
for some $r < 1-2\epsilon$. We can write it as 
$$\psi^{(1)}(t,x) = (t h(x),\varphi(x))$$ for $t\in [\frac{1}{2},r]$, where $\varphi\in \diff(\bT^n)$ is (equivariantly) isotopic to the identity and $h \cl \bT^n \to (0,1]$ is smooth. Note that $\im(4h)\times \bT^n$ is the inner boundary of $\psi([1-2\epsilon,1]\times\bT^n)$. Given an (equivariant) isotopy $\{\varphi_s\}_{s\in [0,1]}$ from the identity to $\varphi$, let $\chi\cl [0,1/3]\to [0,1]$ be a smooth cutoff function, so that $\chi \equiv 0$ near $0$ and $\chi \equiv 1$ near $\frac13$. Fix also a smooth cutoff function $\eta \cl [0,r]\to [0,1]$ so that $\eta \equiv 0$ near $\frac13$, $\eta\equiv 1$ on $[\frac12,r]$ and $\eta'(t) \geq 0$. Set $a := \min h$. Then define $\tilde{\Psi}\cl [0,1]\times \bT^n\to [0,1]\times \bT^n$ by 
\begin{equation*} \tilde{\Psi}(t,x) = \begin{cases}
\psi^{(1)}(t,x) \quad & r \leq t \leq 1\\
\lbr{at\,(\frac{h(x)}{a})^{\eta(t)},\varphi(x)}\quad & \frac13\leq t \leq r\\
(ta,\varphi_{\chi(t)}(x))\quad & 0\leq t\leq \frac13
\end{cases}\end{equation*}
This descends to the desired diffeomorphism of the solid torus. If we start with an equivariant $\psi$, then $\tilde{\Psi}$ is equivariant by construction, and thus so is $\Psi$.\end{proof}

\begin{lemma}[Weinstein neighborhood theorem for solid mutation configurations]\label{lem:compare-solid-mutation-config} Suppose $(L_i,\fT_i) \sub (M_i,\omega_i)$ is a solid mutation configuration for $i \in \{0,1\}$ with $\dim(M_0)= \dim(M_1)$. Then there exist neighbourhoods $U_i \sub M_i$ of $L_i\cup \fT_i$ and a symplectomorphism $\psi \cl U_0 \to U_1$ mapping $(L_0,\fT_0)$ to $(L_1,\fT_1)$.
\end{lemma}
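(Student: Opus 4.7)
The plan is to imitate the proof of the classical Weinstein Lagrangian neighbourhood theorem, but carried out in two stages corresponding to the two Lagrangian pieces and then glued using the extension lemma.

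First, I would apply the standard Weinstein theorem separately to the pair $L_0, L_1$ and to the pair $\fT_0, \fT_1$. To synchronise the two applications, choose identifications $L_0 \cong L_1 \cong \bT^n$ so that $\partial \fT_i \hookrightarrow L_i$ becomes the inclusion of a standard codimension-one subtorus; this is possible because of the Lie group hypothesis in Definition~\ref{def:solid-mut-config}, which forces $\partial \fT_i$ to be a closed codimension-one subtorus of $L_i$. Pick a Lie group isomorphism $f_L\colon L_0 \to L_1$ sending $\partial \fT_0$ to $\partial \fT_1$. By Weinstein's theorem, $f_L$ is covered by a symplectomorphism $\Phi_L$ between open neighbourhoods of $L_0$ and $L_1$. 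Similarly, choose a diffeomorphism $f_\fT\colon \fT_0 \to \fT_1$ restricting to $f_L$ on $\partial \fT_0$, and lift it to a symplectomorphism $\Phi_\fT$ of neighbourhoods of the solid tori.

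Second, the two symplectomorphisms need not agree on the overlap of their domains, which is a neighbourhood of $\partial \fT_0$. On this overlap, $\Phi_L^{-1} \circ \Phi_\fT$ is a symplectomorphism of a collar of $\partial \fT_0$ in $\fT_0$ fixing $\partial \fT_0$ pointwise. Working in a Weinstein collar model where this collar looks like $[1/2, 1] \times \bT^{n-1}$, Lemma~\ref{extend-diffeo} (applied with $n$ replaced by $n-1$, and using torus-equivariance to preserve the Lie group structure on $\partial \fT_0$) extends this collar diffeomorphism to a self-diffeomorphism of the whole solid torus $\bD \times \bT^{n-2}$. A Moser-type argument, relying on the fact that the relative Poincar\'{e} lemma provides a primitive for $\tilde\Delta^*\omega - \omega$ vanishing along $\fT_0$, promotes the smooth extension to a symplectic one. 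Composing $\Phi_\fT$ with this correction yields a modified symplectomorphism that agrees with $\Phi_L$ near $\partial \fT_0$, and the two glue to the desired $\psi\colon U_0 \to U_1$.

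The main obstacle is the passage from smooth to symplectic in the extension step: Lemma~\ref{extend-diffeo} provides only a smooth diffeomorphism, and one then has to correct it to be symplectic while preserving its behaviour near the outer boundary and along the Lagrangian solid torus. This is where the equivariance clause in Lemma~\ref{extend-diffeo} becomes essential, since it preserves the torus action that arises from the Lie group identification of $\partial \fT_i$ with a subtorus of $L_i$, and hence keeps the two Weinstein local models compatible throughout the correction. Once this compatibility is in place, the standard relative Moser argument on the cotangent bundle of $\fT_0$ closes the gap between the two local identifications, and the lemma follows.
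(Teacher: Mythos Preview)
Your overall strategy---Weinstein on each piece plus Lemma~\ref{extend-diffeo} to bridge them---is the right one, but there is a genuine gap in the gluing step. You assert that on the overlap $\Phi_L^{-1}\circ\Phi_\fT$ ``is a symplectomorphism of a collar of $\partial\fT_0$ in $\fT_0$''. It is not: it is a symplectomorphism of an open set of the ambient manifold $M_0$ fixing $\partial\fT_0$ pointwise, and there is no reason it should preserve the Lagrangian $\fT_0$ (nor $L_0$) near the boundary. You constructed $\Phi_L$ from an arbitrary symplectic bundle isomorphism over $L_0$, without requiring it to send $T\fT_0|_{\partial\fT_0}$ to $T\fT_1|_{\partial\fT_1}$, so $\Phi_L(\fT_0)$ need not even be tangent to $\fT_1$. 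Consequently the discrepancy does not restrict to a self-map of the collar $[1/2,1]\times\bT^{n-1}$, and Lemma~\ref{extend-diffeo} does not apply in the form you invoke it.

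The paper fixes exactly this point by reversing the order of operations. It first chooses the symplectic bundle isomorphism $\Phi$ over $L_0$ so that it already carries $T\fT_0|_{\partial\fT_0}$ to $T\fT_1|_{\partial\fT_1}$; then the resulting Weinstein symplectomorphism $\psi'$ sends a collar of $\fT_0$ to a Lagrangian tangent to $\fT_1$ along its boundary, and a Hamiltonian isotopy (using the normal form for clean intersections) makes $\psi'(\fT_0\cap U_0')=\fT_1\cap U_1'$. Only now is Lemma~\ref{extend-diffeo} invoked---not to correct an ambient symplectomorphism, but to extend the diffeomorphism $\psi'|_{\fT_0\cap U_0'}\colon\fT_0\cap U_0'\to\fT_1\cap U_1'$ of collars to a diffeomorphism $\varphi\colon\fT_0\to\fT_1$ of the full solid tori. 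Because $\varphi=\psi'$ near the boundary, its standard lift to a symplectic bundle isomorphism over $\fT_0$ extends $\Phi$, and a second application of Weinstein finishes. Your Moser argument and your remark about equivariance being ``essential'' are both red herrings here: the paper needs no Moser correction at this stage, and equivariance is an optional add-on used only later for Corollary~\ref{cor:local-model-smc}.
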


\begin{proof}  Fix a basis $v_{i,1},\dots,v_{i,n-1}$ of the Lie algebra of $\fT_i\cap L_i$ and extend it to a basis $v_{i,1},\dots,v_{i,n}$ of the Lie algebra of $L_i$. Let $\phi \cl L_0 \to L_1$ to be the unique morphism of Lie groups with $d\phi(e)v_{i,0} = v_{i,1}$. Then $d\phi(e)$ maps the Lie algebra of $L_0 \cap \fT_0$ to the one of $L_1 \cap \fT_1$, so $\phi(L_0 \cap \fT_0) = L_1\cap\fT_1$. Extend $\phi$ to a symplectic bundle isomorphism $\Phi\cl TM_0|_{L_0}\to TM_1|_{L_1}$, which maps $T\fT_0|_{\del \fT_0}$ to $T\fT_1|_{\del \fT_1}$.

By the Weinstein neighbourhood theorem, $\Phi$ defines a symplectomorphism $\psi'\cl U'_0 \to U'_1$ for neighbourhoods $U'_0$ and $U'_1$ of $L_0$, respectively $L_1$. Then $\psi'$ maps $U'_0 \cap \fT_0$ to a manifold tangent to $U'_1 \cap \fT_1$ with the same boundary. As cleanly intersecting submanifolds admits a nice normal form near their intersection, we may change $\psi'$ by a Hamiltonian isotopy to assume $\psi'$ maps $U'_0 \cap \fT_0$ to $U'_1\cap \fT_1$. By Lemma \ref{extend-diffeo} we can extend its restriction to $U_0'\cap \fT_0$ to a diffeomorphism $\varphi\cl \fT_0 \to \fT_1$, possibly shrinking $U_i'$. As $\varphi$ is induced by $\Phi$ near $\del \fT_0$, the standard lift of $\varphi$ to a symplectic vector bundle isomorphism $TM_0|_{\fT_0}\to TM_1|_{\fT_1}$ extends $\Phi$. Using the Weinstein neighbourhood theorem again, we obtain the desired symplectomorphism $\psi$.
\end{proof}

Suppose a torus $\bT$ embeds as subtorus of $\del \fT_0$ and $\del \fT_1$. Multiplication by elements of $\bT$ induces a torus action on $L_i$ and $\fT_i$ which extends to a Hamiltonian $\bT$-action on a neighbourhood of $L_i\cup \fT_i$. Using the equivariant version of the Weinstein neighbourhood theorem and Lemma \ref{extend-diffeo}, we can choose $\psi$ in the previous statement to be equivariant.\par  

We now construct our model neighbourhood, see also \cite[\textsection4.5]{PT20}. Set $X_1 := \bC^2\sm\{x_1x_2 = 1\}$ and endow it with the Lefschetz fibration $\pi \cl X_1 \to \bC \sm \{1\}$ defined by $\pi(x) = x_1x_2$. Given a simple loop (or more generally an embedded path) $\gamma$ in $\bC\sm\{0,1\}$ define the Lagrangian 
\begin{equation}
\label{eq:path_to_lagrangian}
    T_\gamma:= \set{ (x,y) \in \C^2  \bigg| |x| = |y|, \; \pi(x,y) \in \im(\gamma)}.
\end{equation}
By \cite[Lemma 4.13]{PT20}, there exists a primitive $\theta$ of $\omega_{\normalfont\text{std}}|_{X_1}$ and $A > 0$ so that $T_\gamma$ is exact with respect to $\theta$ if and only if $\gamma$ encloses $1$ and a disk of area $A$. Given such a loop $\gamma$ we say $T_\gamma$ (or just $\gamma$) is of \emph{Clifford type} if $\gamma$ encloses $0$ and of \emph{Chekanov type} otherwise. 
If $\ell$ is a line segment starting at $0$, then $T_\ell$ is a vanishing cycle, in particular, a Lagrangian disk.

Given $n \geq 3$, set $X := X_1\times \bC^{n-2}$ and endow it with the restriction of the standard symplectic form on $\bC^n$. Denote $\cc{T}_{\gamma} := T_\gamma\times \bT^{n-2}$. This is exact, with respect to $\tilde{\theta}:= \theta \oplus \theta_{n-2}$ for the standard primitive $\theta_{n-2}$ of $\omega_{\normalfont\text{std}}$ on $\bC^{n-2}$, if $T_\gamma$ is exact. The following is a straightfoward exercise.

\begin{lemma}\label{lem:model-smc} Suppose $\gamma$ is a loop enclosing both $0$ and $r$ and let $\ell$ be the line segment from $0$ to $\min(\{\im(\gamma)\cap i\bR\})$. Then $(\cc{T}_\gamma,\cc{T}_\ell)$ is an SMC.
\end{lemma}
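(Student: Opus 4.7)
The plan is to verify the four conditions of Definition \ref{def:solid-mut-config} for the pair $(\cc{T}_\gamma, \cc{T}_\ell)$ one by one, using the product structure $X = X_1 \times \bC^{n-2}$ to reduce most of the work to the two-dimensional model and then tacking on the flat $\bT^{n-2}$ factor.

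First I would check that $\cc{T}_\gamma$ is a Lagrangian torus. The submanifold $T_\gamma \subset X_1$ fibers over the simple loop $\gamma$ with circle fibers $\{|x|=|y|, xy=c\}$ (for $c\in \gamma$), so it is diffeomorphic to $S^1 \times S^1$; that it is Lagrangian follows because the $S^1$ fiber is Lagrangian in each regular fiber of $\pi$ (it sits inside the circle $|x|=|y|$, which is the real locus in the affine conic $\{xy=c\}$) and because the horizontal direction $\gamma$ lies inside the 1-dimensional base. Taking the product with $\bT^{n-2}\subset \bC^{n-2}$ preserves both the Lagrangian condition and the torus topology. Next, $T_\ell$ is a vanishing thimble for the Lefschetz critical value $0$ along the half-open segment $\ell$; it is a Lagrangian disk, so $\cc{T}_\ell = T_\ell \times \bT^{n-2}$ is Lagrangian and diffeomorphic to $\bD^2 \times \bT^{n-2}$.

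The bulk of the verification is the clean intersection and the Lie subgroup condition. By assumption $\ell$ is radial and meets $\gamma$ precisely at the endpoint $r_0 := \min(\gamma\cap i\bR)$, transversely in $\bC$. Therefore $T_\gamma \cap T_\ell$ projects under $\pi$ to the single point $r_0$, and over $r_0$ both Lagrangians contain the same circle $C_0 := \{(x,y) : |x|=|y|,\; xy=r_0\}$. Hence $\cc{T}_\gamma \cap \cc{T}_\ell = C_0 \times \bT^{n-2}$, which is exactly $\partial \cc{T}_\ell$. For cleanness, at a point $p \in C_0$ the kernel of $d\pi|_{T_pT_\gamma}$ equals the kernel of $d\pi|_{T_pT_\ell}$ and is the tangent to $C_0$; meanwhile the images $d\pi(T_pT_\gamma) = \spn{\gamma'(t_0)}$ and $d\pi(T_pT_\ell) = \spn{\ell'(1)}$ meet only in $\{0\}$ by transversality. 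Consequently $T_pT_\gamma \cap T_pT_\ell = T_pC_0$, and tensoring with $T\bT^{n-2}$ gives clean intersection of $\cc{T}_\gamma$ and $\cc{T}_\ell$ along $C_0 \times \bT^{n-2}$.

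Finally, the Lie subgroup condition: parameterising $T_\gamma$ by $(\theta,\alpha)$ with $\theta$ the loop coordinate on $\gamma$ and $\alpha$ the phase of $x$ (so $y$ is determined by $|x|=|y|$ and $\pi(x,y) = \gamma(\theta)$), one obtains a product Lie group structure $T_\gamma \cong S^1_\theta \times S^1_\alpha$. The intersection circle $C_0$ corresponds to $\theta = \theta_0$, so it is the Lie subgroup $\{e\} \times S^1_\alpha \subset S^1 \times S^1$. Taking the product with the Lie group $\bT^{n-2}$ yields the required embedding of Lie groups $\{e\} \times S^1_\alpha \times \bT^{n-2} \hookrightarrow S^1_\theta \times S^1_\alpha \times \bT^{n-2}$. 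The only mild subtlety is pinning down these coordinates carefully so that the Lie group structure on $T_\gamma$ is well defined; everything else is a routine unpacking of definitions, which is presumably why the authors call the lemma a straightforward exercise.
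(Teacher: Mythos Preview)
Your proof is correct and is precisely the direct verification the paper has in mind when it calls the lemma ``a straightforward exercise'': check that $T_\gamma$ and $T_\ell$ are a Lagrangian torus and disk in the $2$-dimensional model, that they intersect cleanly in the vanishing circle over the unique point $\gamma\cap\ell$, and then take the product with $\bT^{n-2}$. The only point worth making explicit is that transversality of $\gamma$ and $\ell$ at their common point (needed for cleanness) is implicit in the choice of $\gamma$ rather than automatic, but this is clearly the intended reading.
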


We obtain the following corresponding generalisation of \cite[Lemma 4.17]{PT20} by applying Lemma \ref{lem:compare-solid-mutation-config} and the discussion afterwards. 

\begin{cor}\label{cor:local-model-smc}
If $(L,\fT)$ is an SMC in $(M^{2n},\omega)$, there exists a neighbourhood $U \sub M$ of $L\cup \fT$ and an equivariant symplectic embedding $\psi \cl U \hkra X_1\times \bC^{n-2}$ so that $\psi(L,\fT) = (\cc{T}_\gamma,\cc{T}_\ell)$ for $\gamma$ of Clifford type and $\ell$ a line segment as above. 
\end{cor}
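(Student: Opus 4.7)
The plan is to combine Lemma \ref{lem:model-smc} with Lemma \ref{lem:compare-solid-mutation-config} in its equivariant form, which is the main statement we actually need. First I would fix an auxiliary choice on the model side: pick any loop $\gamma\sub\bC\sm\{0,1\}$ of Clifford type enclosing both $0$ and $1$, and let $\ell$ be the line segment from $0$ to the lowest point of $\im(\gamma)\cap i\bR$. By Lemma \ref{lem:model-smc}, the pair $(\cc{T}_\gamma,\cc{T}_\ell)\sub X_1\times\bC^{n-2}$ is an SMC of the same dimension as $(L,\fT)$.

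Next I would exhibit the natural torus symmetry on the model side. The group $\bT^{n-1}$ acts on $X_1\times \bC^{n-2}$ by rotating the argument of $x_1$ (equivalently $x_2$, since $x_1x_2$ is fixed by the diagonal $S^1$-action if we use the antidiagonal on the first two coordinates) and rotating each of the last $n-2$ factors. This action preserves $\omega$, is generated by a moment map, and restricts to the obvious $\bT^{n-1}$-actions on both $\cc{T}_\gamma$ and $\cc{T}_\ell$ that make the inclusion $\cc{T}_\gamma\cap \cc{T}_\ell \hookrightarrow \cc{T}_\gamma$ an embedding of subtori. On the other side, condition \eqref{rigid-torus} of Definition \ref{def:solid-mut-config} gives a subtorus $\bT^{n-1}\iso \fT\cap L\hookrightarrow L$, and $L$ acts on itself by translation; this extends to a Hamiltonian $\bT^{n-1}$-action on a neighbourhood of $L$ preserving $\fT$ near $L\cap\fT$ and extending over $\fT$ by rotating its $\bD^2$-factor.

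Now I would apply the equivariant version of Lemma \ref{lem:compare-solid-mutation-config} to obtain neighbourhoods $U\sub M$ of $L\cup \fT$ and $U'\sub X_1\times\bC^{n-2}$ of $\cc{T}_\gamma\cup \cc{T}_\ell$, together with a $\bT^{n-1}$-equivariant symplectomorphism $\psi\cl U\to U'$ sending $(L,\fT)$ to $(\cc{T}_\gamma,\cc{T}_\ell)$. Composing with the inclusion $U'\hookrightarrow X_1\times\bC^{n-2}$ gives the desired equivariant symplectic embedding.

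The only real subtlety, and the step I would treat as the main obstacle, is organising the equivariant data so that Lemma \ref{lem:compare-solid-mutation-config} genuinely applies equivariantly: one must check that the bundle isomorphism $\Phi\cl TM|_{L}\to T(X_1\times\bC^{n-2})|_{\cc{T}_\gamma}$ can be chosen $\bT^{n-1}$-equivariantly and that the extension across $\fT$ provided by Lemma \ref{extend-diffeo} can be taken equivariant; both are recorded in the discussion immediately after Lemma \ref{lem:compare-solid-mutation-config}, so the corollary follows.
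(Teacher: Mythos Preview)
Your proposal is correct and follows essentially the same approach as the paper: the paper's proof is a one-line application of Lemma \ref{lem:model-smc} together with Lemma \ref{lem:compare-solid-mutation-config} and the equivariant upgrade recorded immediately after it, which is exactly what you have unpacked. Your added discussion of the $\bT^{n-1}$-actions on each side and the care about the equivariant bundle map and equivariant extension via Lemma \ref{extend-diffeo} simply makes explicit what the paper leaves implicit.
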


By \cite[Lemma 4.14]{PT20}, $\cc{T}_\gamma$ and $\cc{T}_{\gamma'}$ are isotopic through a compactly supported Hamiltonian if and only if $\gamma$ and $\gamma'$ (as above) are smoothly isotopic in $\bC\sm\{0,1\}$ and enclose disks of the same area. Thus the following definition is well-defined up to Hamiltonian isotopy.

\begin{defn} Let $(L,\fT)$ be an SMC in $(M^{2n},\omega)$ and let $\psi$ be a symplectomorphism as in Corollary \ref{cor:local-model-smc}. The \emph{solid mutation of $L$ along $\fT$} is $L_\fT := \psi\inv(\cc{T}_{\gamma'})$ for any simple loop $\gamma'$ in $\psi(U)$ of Chekanov type.
\end{defn}

\begin{lemma}\label{lem:smc-monotone}
    If $(L,\fT)$ is an SMC in $(M,\omega)$ and $L$ is monotone, then so is $L_\fT$.
\end{lemma}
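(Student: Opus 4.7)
The plan is to deduce monotonicity of $L_\fT$ from exactness in the complement of the divisor produced by Lemma~\ref{lemm:dondivisor}, and to transfer this exactness from $L$ to $L_\fT$ by comparing two primitives of $\omega$ on a neighbourhood $U$ of $L\cup\fT$: the one inherited from the ambient manifold, and the one pulled back from the local model via $\psi$.

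First, I would apply Lemma~\ref{lemm:dondivisor} to obtain a divisor $D\subset M\setminus(L\cup\fT)$ Poincar\'e dual to $dc_1(M)$ for some $d\gg 1$, together with a primitive $\alpha$ of $\omega$ on $M\setminus D$ with $\alpha|_L$ exact. After shrinking the neighbourhood $U$ from Corollary~\ref{cor:local-model-smc}, I may arrange $U\subset M\setminus D$, so that $L_\fT\subset U\subset M\setminus D$. Set $\beta:=\psi^{*}\tilde\theta$; this is a primitive of $\omega|_U$. The difference $\eta:=\alpha-\beta$ is then a closed $1$-form on $U$. Both $\alpha|_L$ (by construction) and $\beta|_L=\psi^{*}(\tilde\theta|_{\cc{T}_\gamma})$ are exact, the latter because $\cc{T}_\gamma$ is exact with respect to $\tilde\theta$ ($\gamma$ is of Clifford type enclosing a disk of the prescribed area $A$, in the sense of \cite[Lemma~4.13]{PT20}). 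Hence $[\eta|_L]=0$ in $H^1(L;\bR)$.

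The main topological input is that $H^1(U;\bR)\to H^1(L;\bR)$ is injective. Since $U$ deformation retracts onto $L\cup\fT$, Mayer--Vietoris applied to this union with $L\cap\fT=\partial\fT\cong\bT^{n-1}$ shows that $H_1(L)\to H_1(L\cup\fT)$ is already surjective: every generator of $H_1(\fT)\cong\bZ^{n-2}$ is represented by a $\bT^{n-2}$-direction cycle lying inside $L\cap\fT\subset L$, so the Mayer--Vietoris surjection $H_1(L)\oplus H_1(\fT)\to H_1(L\cup\fT)$ factors through $H_1(L)$. Dualising yields the claimed injectivity. Therefore $[\eta]=0$ in $H^1(U;\bR)$, and in particular $\eta|_{L_\fT}$ is exact. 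Since $\cc{T}_{\gamma'}$ is exact with respect to $\tilde\theta$ as well (the Chekanov-type loop $\gamma'$ is chosen to enclose a disk of the same area $A$), $\beta|_{L_\fT}=\psi^{*}(\tilde\theta|_{\cc{T}_{\gamma'}})$ is exact, and therefore $\alpha|_{L_\fT}=\beta|_{L_\fT}+\eta|_{L_\fT}$ is exact.

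Hence $L_\fT\subset M\setminus D$ is exact with respect to $\alpha$ and $D$ is Poincar\'e dual to $dc_1(M)$, so the converse direction of the correspondence underlying Lemma~\ref{lemm:dondivisor} (see \cite[Theorem~3.3]{PT20}) gives monotonicity of $L_\fT$. The subtlest step I anticipate is the Mayer--Vietoris computation showing surjectivity of $H_1(L)\to H_1(U)$, together with verifying that the area conventions genuinely make both $\cc{T}_\gamma$ and $\cc{T}_{\gamma'}$ exact with respect to $\tilde\theta$ in the local model; once these are in hand the rest of the argument is essentially formal.
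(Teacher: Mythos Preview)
Your argument is correct and matches the approach the paper implicitly sets up: it invokes Corollary~\ref{cor:local-model-smc} and refers to \cite[Lemma~2.5]{Cha23}, and the machinery you use (the Donaldson divisor from Lemma~\ref{lemm:dondivisor}, exactness of both model tori with respect to $\tilde\theta$, and comparison of primitives on $U$) is precisely what those references package. Your Mayer--Vietoris step showing $H_1(L)\to H_1(L\cup\fT)$ is surjective is the right topological input, and your identification of the two delicate points (that $\gamma'$ must be chosen with the correct enclosed area so that $\cc T_{\gamma'}$ is exact, and that the converse implication exact-in-complement $\Rightarrow$ monotone requires the specific primitive adapted to $D$ as in \cite[Theorem~3.3]{PT20}) is accurate.
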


\begin{proof} Using Corollary \ref{cor:local-model-smc}, the proof is analogous to the proof of \cite[Lemma 2.5]{Cha23}.\end{proof}

\subsection{Lifting Vianna tori}
\label{sec:lifting_vianna_tori}
A \emph{Markov triple} $(a,b,c)$ is a triple of positive integers satisfying the Diophantine equation $a^2+b^2+c^2 = 3abc$. The set of these triples forms the vertices of the \emph{Markov tree}, where $(a,b,c)$ is connected to $(a',b',c')$ by an edge if and only if $(a',b',c')$ is a \emph{Markov mutation} of $(a,b,c)$, i.e., of the form $(3bc-a,b,c)$, $(a,3ac-b,c)$ or $(a,b,3ab-c)$. By an elementary argument involving Vieta jumping, one sees that the Markov tree is indeed connected and infinite.

For any Markov triple $(a,b,c)$, \cite{Via16} constructs a monotone Lagrangian torus $T_{(a,b,c)}$ in $\P^2$, whose Hamiltonian isotopy class is uniquely determined by $\abc$. To make this precise, let $\P(a^2, b^2, c^2)$ be the weighted projective space associated to a Markov triple $\abc$ with associated degenerations from $\P^2$. By \cite{Via16}, $\P^2$ can be obtained from $\P^2(a^2,b^2,c^2)$ by performing at most three rational blow-downs.

\begin{defn}[Vianna tori]
    The \emph{Vianna torus} $T_{\abc}$ \emph{associated to a Markov triple $\abc$} is defined to be the central fiber of the almost toric fibration of $\P^2$ obtained from the rational blow-down of $\P^2(a^2,b^2,c^2)$.\footnote{Note that Vianna denotes $T_\abc$ by $T(a^2,b^2,c^2)$ instead.}
\end{defn}

In \cite{Via16}, he shows that mutating $T_\abc$ results in a torus which is Hamiltonian isotopic to $T_\abcprime$, where $\abcprime$ is a Markov mutation of $\abc$.

We will lift these tori to monotone Lagrangian tori in $\bP^n$ for $n \geq 3$ using symplectic reduction. In Lemma \ref{lem:mutation-lifts}, we show that a mutation of $T_\abc$ corresponds to a solid mutation of its lift in higher dimensions.

Fix thus $n \geq 3$ and let 
$$\mu_n \cl \bP^n \to \bR^{n-2}: [z]\mapsto \frac{1}{|z|^2}(|z_3|^2,\dots,|z_n|^2)$$ 
be the moment map of the standard Hamiltonian $\bT^{n-2}$-action acting on the last $n-2$ homogeneous coordinates. In particular, it acts freely on $F_n:= \mu\inv_n\lbr{\set{\frac{1}{n+1}\sum_{i=1}^{n-2}e_i}}$. A computation in local coordinates shows that $F_n/\bT^{n-2}$ equipped with the reduced symplectic form is symplectomorphic to $\bP^2$.

Let $q \cl F_n \to\bP^2$ be induced by the quotient map. A straightforward computation shows that the preimage of the Clifford torus in $\bP^2$ under $q$ is the Clifford torus in $\bP^n$. 
\begin{defn}Given a Markov triple $\abc$, we define the \emph{lifted Vianna torus} to be $$T^{(n)}_\abc := q\inv(T_\abc).$$
\end{defn} 

We see that $T^{(n)}_\abc$ is a Lagrangian torus by the definition of the symplectic structure on the symplectic reduction. If $n$ is clear from the context, we let $\ol T_\abc := T^{(n)}_\abc$.

\begin{remark}\label{rem:H1basis} As we can also do the reduction inductively, at each step reducing by an $S^1$-action, we see that for each $\abc$, we obtain a tower $T^{(n)}_\abc =: T_n\to T_{n-1}\to \dots \to T_2 := T_\abc$, where $T_{j+1}\to T_j$ is the restriction of a principal $S^1$-bundle on $\bP^j$. As the Euler class of this $S^1$-bundle is a multiple of $[\omega_{\text{FS}}]$, its restriction to $T_j$ vanishes. Thus $T^{(n)}_\abc\to T_\abc$ is a trivial $\bT^{n-2}$-bundle.\end{remark}

\begin{lemma}\label{lem:mutation-lifts}
    If $\abcprime$ is a Markov mutation of $\abc$, then $T^{(n)}_{\abc}$ and $T^{(n)}_\abcprime$ are solid mutations of each other. In particular, each $T^{(n)}_\abc$ is monotone.
\end{lemma}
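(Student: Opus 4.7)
The plan is to transport Vianna's two-dimensional mutation along a Lagrangian disk in $\bP^2$ to a solid mutation in $\bP^n$ through the reduction $q \cl F_n \to \bP^2$. By Vianna's construction \cite{Via16}, whenever $\abcprime$ is a Markov mutation of $\abc$ there is a Lagrangian disk $D \sub \bP^2$ whose boundary $\del D \sub T_\abc$ is an embedded circle representing a primitive homology class, and such that mutating $T_\abc$ along $D$ produces a torus Hamiltonian isotopic to $T_\abcprime$. Locally near $T_\abc \cup D$, the pair fits the model $(T_\gamma, T_\ell) \sub X_1$ of Section \ref{subsec:smc} with $\gamma$ of Clifford type.

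The first step is to define $\fT := q\inv(D) \sub F_n \sub \bP^n$ and verify that $(T^{(n)}_\abc, \fT)$ satisfies Definition \ref{def:solid-mut-config}. Since the base point lies in the interior of the moment polytope, $q$ is a principal $\bT^{n-2}$-bundle, so its restriction to the contractible disk $D$ is trivial and $\fT \iso \bD^2 \times \bT^{n-2}$. Because $\omega_{\bP^n}$ restricts to $q^*\omega_{\bP^2}$ on $F_n$ and $D$ is Lagrangian in $\bP^2$, the $n$-dimensional submanifold $\fT$ is isotropic, hence Lagrangian. The intersection $\fT \cap T^{(n)}_\abc$ equals $q\inv(\del D) \iso \bT^{n-1}$, and cleanness follows from cleanness of $\del D = D \cap T_\abc$ in $\bP^2$ together with $q$ being a submersion. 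Finally, by Remark \ref{rem:H1basis} the bundle $T^{(n)}_\abc \to T_\abc$ is trivial, and since $\del D$ is a primitive subcircle of $T_\abc$, the inclusion $\fT \cap T^{(n)}_\abc \hkra T^{(n)}_\abc$ is a subtorus inclusion, i.e.\ a Lie subgroup embedding.

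The second step identifies the solid mutation with $T^{(n)}_\abcprime$. By Corollary \ref{cor:local-model-smc}, there is a $\bT^{n-2}$-equivariant symplectic embedding of a neighbourhood of $T^{(n)}_\abc \cup \fT$ into $X_1 \times \bC^{n-2}$ which carries $(T^{(n)}_\abc, \fT)$ onto $(\cc{T}_\gamma, \cc{T}_\ell)$. The compactly supported Hamiltonian isotopy deforming $T_\gamma$ into $T_{\gamma'}$ in $X_1$ extends $\bT^{n-2}$-equivariantly to $X_1 \times \bC^{n-2}$ by acting trivially on the second factor. Pushing the resulting equivariant isotopy back down via $q$ recovers Vianna's two-dimensional disk mutation of $T_\abc$ along $D$, which by \cite{Via16} yields a torus Hamiltonian isotopic to $T_\abcprime$. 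Hence the solid mutation equals $q\inv(T_\abcprime) = T^{(n)}_\abcprime$ up to Hamiltonian isotopy. Monotonicity then follows inductively: $T^{(n)}_{(1,1,1)}$ is the Clifford torus in $\bP^n$, which is classically monotone, and since the Markov tree is connected, repeated application of Lemma \ref{lem:smc-monotone} along a sequence of solid mutations yields monotonicity of every $T^{(n)}_\abc$.

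The main obstacle will be ensuring that the comparison step is genuinely equivariant: the local Weinstein model on $\bP^n$ must be compatible with the $\bT^{n-2}$-action underlying $q$, so that the upstairs solid mutation $\cc{T}_\gamma \leadsto \cc{T}_{\gamma'}$ actually descends to Vianna's downstairs mutation $T_\gamma \leadsto T_{\gamma'}$ in $\bP^2$. This should be handled by the equivariant Weinstein neighbourhood theorem invoked in the discussion following Lemma \ref{lem:compare-solid-mutation-config}, combined with the triviality of $q|_\fT$ and the equivariant extension of the Hamiltonian isotopy in the $X_1 \times \bC^{n-2}$ model.
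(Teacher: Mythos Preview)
Your proposal is correct and follows essentially the same approach as the paper: define $\fT = q^{-1}(D)$, verify it yields an SMC, use the equivariant local model to identify the upstairs solid mutation with the lift of Vianna's downstairs mutation, and deduce monotonicity from Lemma~\ref{lem:smc-monotone}. You fill in more detail than the paper does---in particular, you check each condition of Definition~\ref{def:solid-mut-config} explicitly and spell out the inductive base case for monotonicity---whereas the paper compresses all of this into a few sentences citing the freeness of the $\bT^{n-2}$-action and the local model of \textsection\ref{subsec:smc}.
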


\begin{proof} By Vianna's construction and \cite[Lemma 4.21]{PT20}, there exists a mutation configuration $(T_{\abc},D)$ in $\bP^2$, so that the associated mutation of $T_{\abc}$ is $T_{\abcprime}$. Let $\T = q\inv(D)$. As $q$ is the quotient map of a free $\bT^{n-2}$-action, $(T^{(n)}_{\abc},\T)$ is a solid mutation configuration in $\bP^n$. It follows from the local model in \cite{PT20}, respectively \textsection\ref{subsec:smc} that the solid-mutation of $T^{(n)}_{\abc}$ along $\T$ is the lift of the mutation of $T_{\abc}$ along $D$, and thus Hamiltonian isotopic to $T^{(n)}_{\abcprime}$. The last assertion follows from Lemma \ref{lem:smc-monotone}.
\end{proof}

\section{A Wall-Crossing Formula for the Lifted Vianna Tori}\label{sec:wall-cross}

In this section, we explain how to obtain the wall-crossing formula for the disk potential under a solid mutation. This is a variation of \cite[Theorem 5.7]{PT20}, where we do not require the toric assumption due to our Weinstein neighbourhood theorem for general solid mutation configurations.

Let $X_1 := \C^2 \setminus \{ x_1x_2=1 \}$ be as in the previous section and let $\gamma$ be a loop of Clifford type in $X_1$. Suppose $\ell$ is a straight line segment in $\bC^*$ joining the origin to a point $p\in \gamma$ and only intersecting $\gamma$ at $p$. Set $L_0 := T_\gamma$ and $D_0 := T_\ell$ as defined in (\ref{eq:path_to_lagrangian}). By \cite[Lemma 4.15]{PT20}, there is a small neighbourhood $U_0$ of $L_0 \cup D_0$ such that $U_0$ is Liouville and the Liouville completion of $\ol{U_0}$ (which we can take to be a Liouville domain) agrees with Liouville completion of $X_1$, which is just $X_1$ itself. Let $L_1$ denote a Chekanov type torus in $U_0$.
 
Set $\ol L_i : =   L_i \times \bT^{n-2}$. This is an exact Lagrangian in $(U_0 \times (A_\e)^{n-2},\tilde{\theta}))$ for $i \in \{0,1\}$, where $A_\e:= \{ z \in \C |\, |z-1| <  \e\}$ for $\e > 0$ and $\wt\theta$ was defined in \textsection\ref{subsec:smc}. Let $U_s$ be a Liouville domain obtained by smoothing the corners in $\cc{U_0} \times (\cc{A_\e})^{n-2}$. Since the completion of $A_\e$ is $\C^*$, the completion of $U_s$ is $X_1 \times (\C^*)^{n-2}$, see \cite[\textsection 3.d.]{Oan06}.

Now we extract a local wall-crossing formula from \cite{sei13}. Given arbitrary local systems $\rho_i = (x_i,y_i,z_{ij}) \in (\C^*)^n$, where $i = 0,1$ and $j \in \{1,2,\dots, n-2 \}$, let $\bold L_i := (\ol L_i,\rho_i)$. 

\begin{lemma}[Local Wall-Crossing]\label{lemm:localwall}
    There exists a choice of basis of $H_1(\ol L_i;\bZ) \cong \Z^n $ such that
    $$HF_{X_1 \times (\C^*)^{n-2}} (\bold L_1, \bold L_2) \neq 0$$ 
    if and only if $(x_1,y_1,z_{1,1},z_{1,2}\dots , z_{1,(n-2)}) = (x_0,y_0(1+x_0),z_{0,1},\dots, z_{0,(n-2)})$. 
\end{lemma}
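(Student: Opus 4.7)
The plan is to reduce the Floer cohomology computation on $X_1\times(\C^*)^{n-2}$ to two independent pieces via a K\"unneth-type decomposition, made available by the fact that both the ambient Liouville manifold and the Lagrangians $\ol L_i = L_i\times\bT^{n-2}$ are products. Fix a basis of $H_1(\ol L_i;\Z)$ which is the direct sum of a chosen basis of $H_1(L_i;\Z)$ and the standard basis of $H_1(\bT^{n-2};\Z)\cong\Z^{n-2}$; under this choice the rank-one local system $\rho_i = (x_i,y_i,z_{i,1},\dots,z_{i,n-2})$ factors as the external tensor product of a local system $(x_i,y_i)$ on $L_i$ and one $(z_{i,1},\dots,z_{i,n-2})$ on $\bT^{n-2}$. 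A K\"unneth formula for (exact) Floer cohomology of product Lagrangians in Liouville products, applied to $U_s$ and passed to the completion $X_1\times(\C^*)^{n-2}$, then gives
\[ HF_{X_1\times(\C^*)^{n-2}}\bigl((\ol L_0,\rho_0),(\ol L_1,\rho_1)\bigr) \;\cong\; HF_{X_1}\bigl((L_0,(x_0,y_0)),(L_1,(x_1,y_1))\bigr) \otimes HF_{(\C^*)^{n-2}}\bigl((\bT^{n-2},z_0),(\bT^{n-2},z_1)\bigr). \]

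The second factor is a standard computation: the self-Floer cohomology of the standard torus $\bT^{n-2}\sub(\C^*)^{n-2}$ equipped with rank-one local systems $z_0,z_1$ is nonzero if and only if $z_{0,j}=z_{1,j}$ for every $j\in\{1,\dots,n-2\}$, in which case it recovers the exterior algebra on $H^1(\bT^{n-2})$. This produces the last $n-2$ equalities in the stated condition. For the first factor, the plan is to appeal directly to Seidel's local wall-crossing formula from \cite{sei13}: with a basis of $H_1(L_i;\Z)$ adapted to the Clifford/Chekanov axes (a meridian around the singular fibre of $\pi$ together with a vanishing-cycle direction), the Floer cohomology of $L_0$ and $L_1$ in $X_1$ with local systems $(x_0,y_0)$ and $(x_1,y_1)$ is nonzero if and only if $x_1=x_0$ and $y_1=y_0(1+x_0)$; the extra factor $1+x_0$ reflects the single Maslov-index-zero disk bounded by the Chekanov torus that crosses the Clifford wall.

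The main obstacle will be the bookkeeping in the K\"unneth step: one must verify that the product primitive $\wt\theta$, a product admissible almost complex structure, and a split Hamiltonian perturbation form admissible Floer data on $X_1\times(\C^*)^{n-2}$ for the pair $(\ol L_0,\ol L_1)$, and that the corresponding moduli spaces of holomorphic strips split as products of those in the two factors. In particular, strips must have constant $(\C^*)^{n-2}$-component, which follows from exactness of $\bT^{n-2}$ in $(\C^*)^{n-2}$ with respect to $\theta_{n-2}$; exactness of $\ol L_i$ in $U_s$ rules out disk and sphere bubbling, so the standard Floer package applies. Making the basis of $H_1(\ol L_i;\Z)$ exactly the direct-sum basis above ensures that the Seidel wall-crossing formula in the $X_1$ factor assembles with the trivial wall-crossing in the $(\C^*)^{n-2}$ factor into the stated transformation $(x_0,y_0,z_{0,j})\mapsto(x_0,y_0(1+x_0),z_{0,j})$.
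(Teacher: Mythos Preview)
Your proposal is correct and follows essentially the same approach as the paper. The paper's argument is slightly more direct: rather than invoking a K\"unneth formula abstractly, it simply observes that any holomorphic strip in $X_1\times(\C^*)^{n-2}$ with boundary on $\ol L_0,\ol L_1$ has constant $(\C^*)^{n-2}$-components (exactly your observation), hence is a lift of a strip in $X_1$, and then appeals to \cite[Proposition~11.8]{sei13}; your K\"unneth packaging and separate analysis of $HF_{(\C^*)^{n-2}}(\bT^{n-2},\bT^{n-2})$ amount to the same thing.
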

\begin{proof}
Let $(v_M,v^{(1)}_\bC, \dots, v^{(n-2)}_\bC)$ be a holomorphic strip in $X_1 \times (\bC^*)^{n-2} $ with boundary on $\bold L_0$ and $\bold L_1$. The component $v^{(i)}_\C$ is a holomorphic map from a strip to $\C^*$ with boundary on $\bT^{n-2}$, hence constant. Thus any strip between $\cc{L}_0$ and $\cc{L}_1$ in $M \times (\C^*)^{n-2}$ is a lift from a strip in $M$ between $L_0$ and $L_1$. The result follows from the same argument as in  \cite[Proposition 11.8]{sei13}.
\end{proof}

\begin{remark}\label{rem:basis-from-seeds}  \cite{Via16} exhibits $T_\abc$ as the central fibre of an almost toric fibration on $\bP^2$ over a base $B\sub \bR^2$. Thus, by \cite[Lemma 4.21]{PT20}, we have a canonical identification of $H_1(T_\abc;\bZ)$ with $\bZ^2$ defined by the integral affine structure on $B$. Any Lagrangian disk $D$ lying over the line segment in $B$ from a nodal point $p$ to the central point induces a unique basis of $H_1(T_\abc;\bZ)$ given by $\alpha := [\del D]$ and $\beta$, where $\beta\in \bZ^2$ is the primitive vector going in the direction of the ray. By \cite[Lemma 4.21]{PT20}, $\alpha$ and $\beta$ are orthogonal with respect to the inner product on $\bZ^2\sub \bR^2$. Mutating $T_\abc$ along $D$ gives us a torus in $\bP^2$ which is Hamiltonian isotopic to $T_\abcprime$ for some $\abcprime$ related to $\abc$ by a Markov mutation. Using a smooth isotopy from $T_\abc$ to $T_\abcprime$, $\alpha,\beta$ induce a basis of $H_1(T_\abcprime;\bZ)$ with respect to which 
\begin{equation}\label{eq:2dmut}
    W_{T_\abc} (x,y) = W_{T_\abcprime} (x,y(1+x)).
\end{equation}
holds. If $\alpha'$ and $\beta'$ are constructed in the same way using a different nodal point of the fibration (and different Lagrangian disk $D'$), then $\beta$ and $\beta'$ are linearly independent.\end{remark}

\begin{remark}\label{rem:sol-mut-basis} Given a basis $\alpha,\beta$ of $H_1(T_\abc;\bZ)$ we can use Remark \ref{rem:H1basis} to lift this to a basis $\wt\alpha,\wt\beta,\wt\gamma_1,\dots,\wt\gamma_{n-2}$ of $H_1(\ol T_\abc;\bZ)$, where $\wt\gamma_i$ is induced by the $S^1$-action of the $i^{\text{th}}$-factor of $\bT^{n-2}$. 
 \end{remark}  

\begin{Theorem}\label{thm:wallcrossing}
    There is a choice of basis of $H_1(\ol T_\abc;\bZ)$ and $H_1(\ol T_\abcprime;\bZ)$ such that the disk potentials are related by  \begin{equation}\label{eq:wall-cross-lifts}
        W_{\ol{T}_\abc} (x,y,z_1,\dots,z_{n-2}) = W_{\ol{T}_\abcprime} (x,y(1+x),z_1,\dots, z_{n-2}),
    \end{equation} 
    where $z_i$ is the holonomy around the $i^{\text{th}}$ factor of the $\bT^{n-2}$-fibre of the fibration $\ol T_\abc \to T_\abc$ and $x,y$ are the holonomy around horizontal lifts of the basis as in Remark \ref{rem:H1basis} for which the wall-crossing formula \eqref{eq:2dmut} holds.
\end{Theorem}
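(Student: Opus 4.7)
The plan is to adapt the proof of \cite[Theorem 5.7]{PT20} to our non-toric setting by replacing the toric local model used there with the Weinstein model for solid mutation configurations established in Corollary \ref{cor:local-model-smc}.

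First I would set up the global framework. By Lemma \ref{lem:smc-monotone} both $\ol T_\abc$ and $\ol T_\abcprime$ are monotone, so Lemma \ref{lemm:dondivisor} produces a divisor $D \subset \bP^n$ Poincar\'e dual to $d c_1(\bP^n)$ for $d \gg 1$ such that both Lagrangians are exact in the Liouville domain $W := \bP^n \setminus D$. After a small perturbation I arrange $D$ to be disjoint from the Weinstein neighborhood $U$ of $\ol T_\abc \cup \T$ provided by Corollary \ref{cor:local-model-smc}, so that $U$ embeds symplectically into $X_1 \times \bC^{n-2}$, carrying $(\ol T_\abc, \T)$ to $(\cc T_\gamma, \cc T_\ell)$ for some Clifford-type loop $\gamma$, and carrying $\ol T_\abcprime$ (after a Hamiltonian isotopy supported in $U$) to $\cc T_{\gamma'}$ for a Chekanov-type loop $\gamma'$ enclosing the same area. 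This puts both tori, together with their local systems, into the common Liouville completion $X_1 \times (\bC^*)^{n-2}$.

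Next I would exploit the fact that in the monotone setting, nonvanishing of $HF_{\bP^n}(\ol T_\abcprime^{\rho_1}, \ol T_\abc^{\rho_0})$ forces the disk potentials to agree on the chosen local systems, i.e.\ $W_{\ol T_\abc}(\rho_0) = W_{\ol T_\abcprime}(\rho_1)$. The idea is to compute this global Floer cohomology using the local model: by choosing a tailored almost complex structure that makes $D$ pseudoholomorphic and stretching the neck along $\partial U$, all Floer strips contributing to $HF_{\bP^n}(\ol T_\abcprime^{\rho_1}, \ol T_\abc^{\rho_0})$ localise to strips in $U \subset X_1 \times (\bC^*)^{n-2}$ (the alternative being sphere or disk bubbling against $D$, which is ruled out by monotonicity and index considerations for Maslov 0 strips). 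Combined with Lemma \ref{lemm:localwall}, this gives that $HF_{\bP^n}(\ol T_\abcprime^{\rho_1}, \ol T_\abc^{\rho_0}) \neq 0$ precisely when $(x_1,y_1,z_{1,i}) = (x_0, y_0(1+x_0), z_{0,i})$, yielding \eqref{eq:wall-cross-lifts}. For the choice of basis, I would use Remark \ref{rem:basis-from-seeds} to obtain the basis $\alpha, \beta$ of $H_1(T_\abc;\bZ)$ associated to the Lagrangian disk realising the mutation, and then apply Remark \ref{rem:sol-mut-basis} to lift it to a basis $\wt\alpha, \wt\beta, \wt\gamma_1, \dots, \wt\gamma_{n-2}$ of $H_1(\ol T_\abc; \bZ)$, along which the holonomy variables $x, y, z_1, \ldots, z_{n-2}$ are read off.

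The main obstacle will be the neck-stretching/localisation step: strictly speaking, one must show that every Floer strip contributing to $HF_{\bP^n}(\ol T_\abcprime^{\rho_1}, \ol T_\abc^{\rho_0})$ can be replaced (up to regular homotopy of the almost complex structure) by a strip contained in the local model, with no contributions from pseudoholomorphic disks or spheres escaping to hit $D$. Monotonicity of $\bP^n$ and of both Lagrangians (via Lemma \ref{lem:smc-monotone}), together with the fact that the Lagrangians coincide outside $U$ up to Hamiltonian isotopy, makes the matching of "outside" contributions between the two sides of the mutation tautological, and the analytical details follow the pattern of \cite[\textsection5]{PT20}. The payoff of Corollary \ref{cor:local-model-smc} is precisely that this standard argument now runs without the toric hypothesis.
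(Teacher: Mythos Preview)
Your overall strategy matches the paper's: embed the SMC into the local model via Corollary \ref{cor:local-model-smc}, compute Floer cohomology there using Lemma \ref{lemm:localwall}, and transfer this to a relation between disk potentials in $\bP^n$. However, there is one concrete gap and one unnecessary detour.

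The gap is that you never verify that the image of $U$ sits in $\bP^n\setminus D$ as a \emph{Liouville subdomain}, i.e.\ that the closed $1$-form $\lambda := \tilde\theta - \phi^*\theta_{\bP^n\setminus D}$ is exact. Without this, $\partial U$ need not be of contact type in $\bP^n\setminus D$, so neither your neck-stretching argument nor \cite[Theorem 1.1]{PT20} is available. The paper checks this explicitly and it is essentially the only new content of the proof: since $U_0\times (A_\e)^{n-2}$ deformation retracts onto $\cc L_0 \cup (D_0\times\bT^{n-2})$, every loop is homotopic into $\cc L_0$, and as $\cc L_0$ is exact for both primitives one gets $\int_\alpha \lambda = 0$ for all $\alpha$.

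The detour is that, once the Liouville subdomain condition holds, there is no need to redo neck-stretching by hand. The paper simply invokes \cite[Theorem 3.1]{PT20} to identify $HF_{U_s}$ with $HF_{X_1\times(\bC^*)^{n-2}}$, feeds in Lemma \ref{lemm:localwall}, and then applies \cite[Theorem 1.1]{PT20} as a black box to obtain \eqref{eq:wall-cross-lifts}. Your proposed SFT localisation would ultimately work, but it re-proves exactly what \cite[Theorem 1.1]{PT20} already packages; the point of Corollary \ref{cor:local-model-smc} is that it supplies the hypotheses of that theorem without any toric assumption.
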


\begin{proof}  
    By Corollary \ref{cor:local-model-smc} there exists an equivariant symplectic embedding $\phi : U_0 \times (A_\e)^{n-2} \to \P^n$ and Lagrangians $L_0,D_0,L_1 \sub U_0$ as defined above so that $\phi(\ol{L}_0) = \ol{T}_\abc$, $\phi(D_0 \times \bT^{n-2}) = \T$ where $\T$ is a solid torus, together with a Hamiltonian diffeomorphism $\psi_H$ of $\bP^n$ satisfying $\phi(\cc{L}_1) = \psi_H (\ol{T}_\abcprime)$.\\
    Let $D\sub \bP^n$ be a Donaldson divisor for $(\ol T_\abc,\T)$ as in Lemma \ref{lemm:dondivisor}. Choosing $U_0$ and $\e$ sufficiently small, we may assume $\phi(U_0 \times (A_\e)^{n-2}) \sub \P^n \setminus {D}$. 
    We claim that $\phi(U_0 \times (A_\e)^{n-2})$ is a Liouville subdomain of $\bP^n\sm D$, i.e., that $\lambda:= \tilde\theta - \phi^*\theta_{\P^n \setminus D}$ is exact, where $\theta_{\P^n\setminus D}$ is a primitive of $\omega_{\text{FS}}|_{\bP^n\sm D}$. Let $\alpha$ be any loop in $U_0\times (A_\e)^{n-2}$. As $U_0 \times (A_\e)^{n-2}$ deformation retracts onto $\cc{L}_0 \cup D_0 \times \bT^{n-2}$, we may assume $\alpha = \iota_*\alpha'$ for some $\alpha'\in \pi_1(\cc{L}_0)$ and $\iota\cl \cc{L}_0 \hkra U_0\times (A_\epsilon)^{n-2}$ the inclusion. Thus $$\int_\alpha \lambda = \int_{\alpha'}\iota^*\lambda = 0,$$
    since $\cc{L}_0$ is exact with respect to both primitive $1$-forms. Thus $\lambda$ is exact.
    
By Lemma \ref{lemm:localwall} and \cite[Theorem 3.1]{PT20}, we know that under the given choice of basis of $H_1 (\cc{L}_i;\bZ)$, 
$$HF_{U_s} (\bold L_0, \bold L_1)\cong HF_{X_1\times (\bC^*)^{n-2}} (\bold L_0, \bold L_1)  \neq 0$$ 
if and only if $\bold{L}_0$ is $\cc{L}_0$ equipped with the local system $(x,y,z_1,\dots,z_{n-2})$ and $\bold{L}_1$ is $\cc{L}_1$ equipped with $(x,y(1+x),z_1,\dots,z_{n-2})$. As the disk potential is invariant under Hamiltonian isotopy, we see that $W_{\ol T_\abcprime} = W_{\phi(\cc{L}_1)}$. Therefore \cite[Theorem 1.1]{PT20} implies $$W_{\ol T_\abc} (x,y,z_1,\dots, z_{n-2}) = W_{\ol T_\abcprime} (x,y(1+x),z_1, \dots, z_{n-2}).$$
\end{proof}   

\begin{remark} The proof of Theorem \ref{thm:wallcrossing} was phrased in terms of general solid mutation configurations and the statement actually holds in that generality.
\end{remark}

\section{Distinguishing the Tori in $\bP^n$ }

In order to distinguish the tori $T_\abc$, Vianna shows that the Newton polytope associated to the disk potential of $T_\abc$ is a triangle with edges of affine length $a$, $b$  and $c$. As the Newton polytope of $W_{T_\abc}$ is invariant of the symplectomorphism class of $T_\abc$, this implies that the symplectomorphism class of $T_\abc$ in $\P^2$ is uniquely determined by $\abc$.

We will use this result from \cite{Via16} to prove a similar result for the lifted Vianna tori. The argument uses induction on the Markov tree and properties of the Vianna tori and their Newton polytopes. 

\subsection{Newton polytopes}\label{subsec:newt}

Given $n \geq 3$ let $R:= \bR[x_1^\pm,\dots,x_n^\pm]$ be the ring of Laurent polynomials in $n$ variables. We identify the set of monomials in $R$ with $\bZ^n$ in the obvious way. The \emph{Newton polytope} of a Laurent polynomial $f = \s{k\in \bZ^n}{a_kx_1^{k_1}\cdots x_n^{k_n}}\in R$ is the the closed convex hull
$$\newt(f) := \normalfont\text{Conv}(\{k \in \bZ^n : a_k \neq 0\}).$$
This association is equivariant with respect to the $\normalfont\text{GL}(n,\bZ)$-action on $R$, defined in \cite[Remark 4.2]{PT20}, and the standard action on $\bR^n$. 

\begin{example}\label{ex:newt-cliff} Recall that the disk potential  of the Clifford torus in dimension $n$ is 
\begin{equation}\label{eq:cliff-newt}
    W_{\ol T_{(1,1,1)}}(x) = x_1+\dots +x_n + \frac{1}{x_1\cdots x_n}.
\end{equation}
 Thus $\newt(W_{\ol T_{(1,1,1)}}) = \Conv(e_1,\dots,e_n,-\sum_{i=1}^ne_i)$ is an $n$-simplex. Here $e_1,\dots,e_n$ denotes the standard basis of $\bR^n$.\footnote{Note that this Laurent polynomial is not yet mutable with respect to $(x_1,\dots,x_n) \mapsto (x_1,x_2(1+x_1),x_3,\dots,x_n)$; we first have to apply the coordinate change $(x_1,\dots,x_n) \mapsto (\frac{x_1}{x_2},x_2,\dots,x_n)$.}
\begin{center}\begin{figure}[ht]
   \centering
 \hbox{\hspace{5em}\vspace{0.1em}   
   
        \def\svgscale{1}
    \import{./pics/}{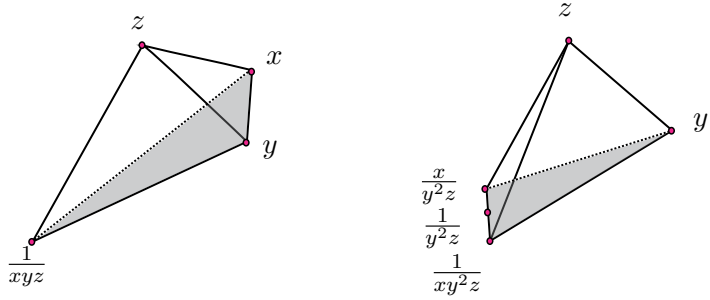}

 }
 \caption{Newton polytopes of $W_{\ol T_{(1,1,1)}}$ and $W_{\ol T_{(1,1,2)}}$ }
 \label{fig:w3poly}
\end{figure}\end{center}
\end{example}

In particular, the Newton polytope of the Clifford torus is a \emph{Fano polytope} as defined in \cite{ACG12}; i.e.,
\begin{enumerate}
    \item\label{pol:nondeg} the polytope is convex;
    \item\label{pol:origin} it contains $0$ in its interior;
    \item\label{pol:vertices} its vertices are primitive in $\bZ^n$.
\end{enumerate}

In \textsection\ref{sec:wall-cross} we showed that a solid mutation of a suitable Lagrangian torus corresponds to a specific algebraic mutation of its disk potential. By \cite{ACG12}, an algebraic mutation of $f\in  R$ corresponds to a \emph{combinatorial mutation} of $\newt(f)$. Refer to \cite[Definition 5]{ACG12} for a precise description and to Remark 5 op. cit. for a discussion of the relationship between algebraic and combinatorial mutations. Note that we are only interested in algebraic mutations of the form $(x_1,\dots,x_n) \mapsto (x_1,x_2(1+x_1),x_3,\dots,x_n)$. See \cite[\textsection 5]{PT20} for more general algebraic mutations that occur if one uses different geometric mutations of Lagrangians.

\begin{example}\label{ex:newt-chek} Solid-mutating the Clifford torus $\cc{T}_{(1,1,1)}$ in $\bP^n$ once, we obtain the Chekanov torus $\cc{T}_{(1,1,2)}$ with disk potential 
$$W_{\ol T_{(1,1,2)}}(x_1,\dots,x_n) = x_2 + x_3+\dots +x_n+ \frac{(1+x_1)^2}{x_1x_2^2x_3\cdots x_n}.$$ 
See Figure \ref{fig:w3poly} for its Newton polytopes in dimension $2$ and $3$.
\end{example}

By \cite[Proposition 2]{ACG12}, the combinatorial mutation of a Fano polytope is again a Fano polytope. From Example \ref{ex:newt-cliff} it follows that $\newt(W_{\ol T_\abc})$ is a Fano polytope for any Markov triple $\abc$.
The proof of Theorem \ref{thm:newt-pol-differ} relies heavily on going back and forth between Laurent polynomials and their associated Newton polytopes. 

From now on we will use $x,y,z_1,\dots,z_{n-2}$ instead of $x_1,\dots,x_n$ to distinguish between the variables which are involved in the algebraic mutation and those which are not. We denote the associated coordinates of $\bR^n$ by $\textbf{x},\textbf{y},\textbf{z}_1,\dots,\textbf{z}_{n-2}$.

Now we prove some lemmas in preparation for the proof of the main theorem in \textsection\ref{sec:induction}.

\begin{lemma}\label{lem:observations}
\begin{enumerate}[\normalfont a)]
    \item\label{newt-triangles} Suppose the Newton polytope of 
    $$f(x,y) = \sum_{i= m}^M y^if_i(x)$$ 
    is Fano with $f_m \neq 0$ and $f_M \neq 0$. Then $m < 0 < M$. Moreover, if $\newt(f)$ is a triangle, then either $f_M$ or $f_m$ is a monomial.
    \item\label{newt-linearity} Suppose the Newton polytope of $$g(x,y,z) = \sum_{i = m}^M y^iz^{j_{1i}}_1\cdots z^{j_{(n-2)i}}_{n-2}g_i(x)$$ 
    is contained in the affine plane $H = w+\spn{v,e_1}$ for some $v = (v_1,\dots,v_n) \in \bZ^n$ such that $v_2 \neq 0$ and $w \in \bZ^n$. Then $j_{ri}$ depends linearly on $i$ for any $r \in \{1,\dots,n-2\}$.
    \end{enumerate}
\end{lemma}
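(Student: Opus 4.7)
For part (\ref{newt-triangles}), the plan is to exploit the Fano property and then count vertices. Since $\newt(f)$ is Fano, it contains the origin in its interior; in particular it is full-dimensional. Because the projection $\pi_{\textbf{y}}\cl \bR^2 \to \bR$ onto the $\textbf{y}$-axis is linear and surjective on $\newt(f)$, it sends the interior of the convex body $\newt(f)$ into the interior of its image. The image is exactly the segment $[m,M]$, since $f_m, f_M \neq 0$ and $\newt(f)$ is convex, so we conclude $0 \in (m,M)$, i.e.\ $m < 0 < M$.

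For the triangle claim, consider the faces $F_M := \newt(f)\cap\{\textbf{y}=M\}$ and $F_m := \newt(f)\cap\{\textbf{y}=m\}$. Each is either a single vertex or an edge (with two vertices) of the triangle. Because $m < M$ these two faces are disjoint, so their vertex sets are disjoint as well. If both were edges, together they would contain $2+2=4$ distinct vertices, contradicting the fact that a triangle has only three. Hence at least one of $F_M$, $F_m$ is a single point, which means that the corresponding $f_M$ or $f_m$ has singleton support and is therefore a monomial.

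For part (\ref{newt-linearity}), the plan is to use the second coordinate of $H$ to pin down the parameter and then read off the remaining coordinates. Parametrize $H$ by $(s,t)\mapsto w + sv + te_1$. Any point in the support of $g$ has the form $(k,i,j_{1i},\dots,j_{(n-2)i})$ for some integer $k$ and some $i$ with $m \leq i \leq M$ and $g_i \neq 0$. Since this point lies in $H$, matching the second coordinate gives $i = w_2 + sv_2$, and because $v_2 \neq 0$ we may solve
\[
    s = \frac{i - w_2}{v_2},
\]
so $s$ depends only on $i$. Substituting into the $(r+2)$-th coordinate equation then yields
\[
    j_{ri} = w_{r+2} + \frac{v_{r+2}}{v_2}\,(i - w_2),
\]
which is affine in $i$, as required.

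The only subtle point I anticipate is the vertex-counting argument in part (\ref{newt-triangles}); one must explicitly record that $F_M$ and $F_m$ are disjoint (which uses $m<M$, itself guaranteed by the Fano condition and the hypothesis $f_m, f_M \neq 0$) so that their vertex sets do not overlap. Everything else reduces to elementary convex and affine geometry.
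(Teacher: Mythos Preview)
Your proposal is correct and follows essentially the same approach as the paper: for (\ref{newt-triangles}) you use the Fano interior condition to get $m<0<M$ and then a vertex count on the two extremal faces $F_M,F_m$ (the paper phrases this as a contrapositive rather than a contradiction, but it is the same argument), and for (\ref{newt-linearity}) you parametrize $H$ and solve for the affine parameter via $v_2\neq 0$, exactly as the paper does.
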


\begin{proof}
\begin{enumerate}[\normalfont a)]
    \item By \eqref{pol:origin}, $\newt(f)$ has nonempty interior, so the first property is immediate. To see the second, suppose $f_M$ is not a monomial. Then $\newt(f) \cap \{\textbf{y} = M\}$ is an edge of $\newt(f)$; in particular it contains two vertices of $\newt(f)$. As $\newt(f)\cap \{\textbf{y} = m\}$ is nonempty and only touches the boundary of $\newt(f)$ it must therefore be a single point and thus $f_m$ a monomial.
    \item Any monomial in $g$ is of the form $x^ky^iz^{j_{1i}}_1\cdots z^{j_{(n-2)i}}_{n-2}$ where 
    $$(k,i,j_{1i},\dots,j_{(n-2)i}) = w + av + be_1 = (w_1+av_1+b,av_2+w_2,av'_{1}+w_1',\dots,av'_{n-2}+w'_{n-2})$$ 
    for some $a,b \in \bR$. As $v_2 \neq 0$, we obtain $j_{ri} =(i-w_2)\frac{v'_r}{v_2}+ w'_r$. 
\end{enumerate}    
\end{proof}

We will use the following properties of the Newton polytopes associated to the Vianna tori. The first result is a summary of \cite[Lemma 4.11]{Via16} and the discussion loc. cit.

\begin{lemma}\label{lemm:viatrian}
    The Newton polytope $\newt(W_{T_\abc})$ is a triangle whose edges have affine lengths $a,b,$ and $c$. Moreover, the coefficients of the monomials in $W_{T_\abc}$ corresponding to the vertices of $\newt(W_{T_\abc})$ are $\pm 1$.
\end{lemma}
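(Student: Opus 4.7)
The statement is essentially \cite[Lemma 4.11]{Via16}, so the most economical route is to simply invoke Vianna's result. For a self-contained argument, I would proceed by induction on the Markov tree. The base case is the Clifford torus $T_{(1,1,1)}$, whose disk potential is $W(x,y) = x + y + \frac{1}{xy}$; its Newton polytope is the triangle with vertices $(1,0)$, $(0,1)$, $(-1,-1)$, each edge of affine length $1$ and each vertex coefficient equal to $+1$. This settles the base case.

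For the inductive step, assume $\newt(W_{T_\abc})$ is a triangle with edge lengths $a, b, c$ and vertex coefficients $\pm 1$, and let $\abcprime = (3bc-a, b, c)$ be a Markov mutation of $\abc$. By Remark \ref{rem:basis-from-seeds} and the wall-crossing formula \eqref{eq:2dmut}, after choosing appropriate bases $W_{T_\abc}$ and $W_{T_\abcprime}$ are related by the algebraic mutation $y \mapsto y(1+x)$; by \cite{ACG12} this corresponds to a combinatorial mutation of $\newt(W_{T_\abc})$ yielding $\newt(W_{T_\abcprime})$. The basis coming from Remark \ref{rem:basis-from-seeds} aligns the mutation direction with the edge of length $a$ of the triangle. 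A direct computation with the explicit combinatorial mutation formula of \cite{ACG12}, combined with the Markov identity $a^2 + b^2 + c^2 = 3abc$, then shows that the result is again a triangle, now with side lengths $(3bc-a, b, c)$.

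The main obstacle is tracking the vertex coefficients through the mutation. Each new vertex arises from collecting $(1+x)^k y^m$-type contributions at the boundary of the previous Newton polytope, so its coefficient is a specific binomial sum which must evaluate to $\pm 1$. One approach is to verify this combinatorially from the inductive hypothesis by exploiting the extremal monomial structure: the $\pm 1$ coefficients sit at vertices of $\newt(W_{T_\abc})$, so only a single term typically contributes to each extremal coefficient of the mutated potential. Alternatively, and more transparently, one can read the coefficients off the almost toric fibration of $\bP^2$ as in \cite{Via16}: each vertex of $\newt(W_{T_\abc})$ corresponds to a Maslov-$2$ disk class passing through a single smooth point of $\bP^2$, automatically contributing $\pm 1$ to the disk potential. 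I would expect this coefficient-tracking step to be the technical bottleneck of a fully independent proof.
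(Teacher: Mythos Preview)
Your proposal is correct and matches the paper's approach exactly: the paper provides no proof of this lemma at all, introducing it only as ``a summary of \cite[Lemma 4.11]{Via16} and the discussion loc.\ cit.'', which is precisely what your first sentence does. Your subsequent sketch of a self-contained inductive argument is extra material not present in the paper.
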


The following statement is probably known to experts, but we did not find a proof in the literature. We provide a proof for completeness.

\begin{lemma} \label{lem:binomedge}
  Each point on an edge of the triangle $\newt(W_{T_\abc})$ corresponds to a monomial in $W_{T_\abc}$. Moreover, its coefficient is binomial.
\end{lemma}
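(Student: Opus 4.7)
The plan is to use induction on the combinatorial distance of $\abc$ from $(1,1,1)$ in the Markov tree, strengthening the inductive hypothesis to the following structural claim: for each edge of $\newt(W_{T_\abc})$ of affine length $k \in \{a,b,c\}$, the sum of the monomials of $W_{T_\abc}$ corresponding to lattice points on that edge factors as $m \cdot (1+u)^k$ for appropriate Laurent monomials $m$ and $u$ (depending on the edge). Once this is shown, the lemma follows at once from the binomial theorem: the monomial at the $j$-th lattice point of the edge (for $0 \leq j \leq k$) has coefficient $\pm\binom{k}{j}$, which is both binomial and, at $j=0,k$, equal to $\pm1$, as required by Lemma \ref{lemm:viatrian}.

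For the base case $\abc = (1,1,1)$, we have $W_{T_{(1,1,1)}} = x + y + (xy)^{-1}$, whose Newton polytope is a triangle with unit edges. Each edge-restriction has exactly two monomials with coefficient $+1$, hence trivially factors as $m(1+u)^{1}$; for instance, the edge joining $x$ and $y$ yields $x + y = x(1 + y/x)$. For the inductive step, let $\abcprime$ be a Markov mutation of $\abc$, say $a' = 3bc-a$. By equation \eqref{eq:2dmut} and Remark \ref{rem:basis-from-seeds}, there is a basis in which $W_{T_\abc}(x,y) = W_{T_\abcprime}(x, y(1+x))$, equivalently $W_{T_\abcprime}(x,y) = W_{T_\abc}(x, y/(1+x))$ as rational functions. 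The Newton polytope $\newt(W_{T_\abcprime})$ is obtained from $\newt(W_{T_\abc})$ by a combinatorial mutation in the sense of \cite{ACG12}: the edge of length $a$ is replaced by one of length $a' = 3bc-a$, while the other two edges retain lengths $b$ and $c$.

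I would then substitute $y \mapsto y/(1+x)$ into the edge-factorizations of $W_{T_\abc}$ furnished by the inductive hypothesis. The substitution introduces factors $(1+x)^{-j}$ depending on the $y$-exponent $j$ of each monomial; the $y$-exponents are constant along the mutated edge in the adapted basis, and the $(1+u)^a = (1+x)^a$ edge-factor on that edge cancels the substitution denominator precisely, reflecting the Laurent phenomenon. For the two unchanged edges (of lengths $b$ and $c$), a careful choice of basis allows the factorizations $m(1+u)^b$ and $m(1+u)^c$ to transform into analogous factorizations of the same lengths for $W_{T_\abcprime}$. For the emergent long edge of length $a'$, contributions from all three original edge-factorizations combine under the substitution, and one must verify that the result is again of the form $m'(1+u')^{a'}$.

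The main obstacle is this final verification for the new long edge: one has to show that the several transformed contributions assemble, through cancellations, into the binomial coefficients $\binom{a'}{j}$ for $j = 0, \dots, a'$. This ultimately reduces to combinatorial identities encoded by the Markov recursion $a^2 + b^2 + c^2 = 3abc$ and by the explicit combinatorial mutations of Newton polytopes described in \cite{ACG12}. A slicker alternative would appeal to the almost toric fibration structure underlying $T_\abc$, where each of the three nodes of multiplicities $a$, $b$, $c$ contributes a wall-crossing factor $(1+u)^k$ directly to the disk potential by the local model of \textsection\ref{sec:wall-cross}, yielding the strengthened hypothesis without induction.
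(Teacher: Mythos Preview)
Your inductive scheme can be made to work, but the proposal leaves the key steps unverified and, more tellingly, misidentifies where the difficulty lies. In the adapted basis the vertex opposite the length-$a$ edge sits at height $\textbf{y}=m$, and since $\newt(W_{T_\abc})$ is a triangle, the slice $C_m(x)$ is a single monomial $\pm x^{k_m}$ (Lemma~\ref{lem:observations}\eqref{newt-triangles} together with Lemma~\ref{lemm:viatrian}). The substitution $y\mapsto y/(1+x)$ therefore sends the $y^m$-slice to $\pm x^{k_m}(1+x)^{|m|}y^m$, so the ``emergent long edge'' you flag as the main obstacle is in fact immediate: its coefficients are visibly binomial, with nothing to combine from the other edges. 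Conversely, the two side edges you dismiss with ``a careful choice of basis allows\dots'' run through all heights $m\le i\le M$, so each of their monomials picks up a \emph{different} power of $(1+x)$ under the substitution, and an argument is genuinely needed; the correct one is that dividing or multiplying $C_i(x)$ by a power of $(1+x)$ leaves its extremal (lowest- or highest-degree) coefficient unchanged, so the lattice-point coefficients along each side edge are literally preserved by the mutation.

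The paper takes a shorter, non-inductive route. It uses Lemma~\ref{lemm:viatrian} as a black box (triangle with vertex coefficients $\pm 1$) together with Remark~\ref{rem:basis-from-seeds}: each of the three nodes of the almost toric fibration determines a mutation direction, and the three resulting bases place the three edges of the triangle in turn at the top height $\textbf{y}=M$. For any one such basis, mutability of $W_{T_\abc}$ forces $(1+x)^M\mid C_M(x)$; then $C_m$ is a monomial by Lemma~\ref{lem:observations}\eqref{newt-triangles}, so after mutation the bottom slice is not a monomial, and applying Lemma~\ref{lem:observations}\eqref{newt-triangles} again to the (also triangular) mutated polytope forces $C_M(x)/(1+x)^M$ to be a monomial. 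Hence $C_M(x)=\pm x^{k}(1+x)^M$. Doing this once per node handles all three edges directly, with no induction on the Markov tree. This is essentially the ``slicker alternative'' you gesture at in your last sentence; that is the argument to write out.
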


\begin{proof}
    Let $\abcprime$ be a Markov triple obtained from $\abc$ by a Markov mutation. Fix bases of $H_1(T_\abc;\bZ)$ and $H_1(T_\abcprime;\bZ)$ so that Theorem \ref{thm:wallcrossing} holds. Let 
    $$W_{T_\abc}(x,y) = \sum_{i=m}^{M} y^iC_i(x)$$ be the disk potential with respect to the chosen basis. As $W_{T_\abcprime}(x,y) = W_{T_\abc} (x,\frac{y}{1+x})$
    is a Laurent polynomial, we must have $(1+x)^M | C_M(x)$. Then $C_M(x)$ cannot be a monomial, so $C_m(x) = ax^{k_m}$ for some $k_m\in \Z$ by Lemma \ref{lem:observations}\eqref{newt-triangles}. By the wall-crossing formula and \eqref{newt-triangles} applied to $W_{T_\abcprime}$, we obtain that $\frac{C_M(x)}{(1+x)^M}$ is a monomial with the same coefficient as $y^MC_M(x)$ in $W_{T_\abc}$. By Lemma \ref{lemm:viatrian}, the coefficient of $\frac{y^M C_M(x)}{(1+x)^M}$ in $W_{T_\abcprime}$ is $\pm 1$, so 
    $$C_M(x) = \pm x^{k_M}(1+x)^M.$$ 
    Thus, the coefficients along the edge $e = \Conv(\{(i,M) : x^i\in C_M(x)\})$ corresponding to $y^MC_M(x)$ are binomial. In particular, the monomials corresponding to the vertices of $e$ have exactly the same coefficients, i.e., both are either $1$ or $-1$.  By (the last observation in) Remark \ref{rem:basis-from-seeds}, choosing a different basis of $H_1(T_\abc;\bZ)$ will exhibit a different side of $\newt(W_{T_\abc})$ as the face given by intersecting the plane $\{\textbf{y} = M'\}$, where $M'$ is the maximal degree of $y$ appearing in $W_{T_\abc}$ with respect to the new basis. Thus we may conclude. 
 \end{proof}
   
\subsection{Distinguishing tori by induction}
\label{sec:induction}

To prove Theorem \ref{thm:newt-pol-differ} , we use an induction based on the (infinite) Markov tree. Explicitly, we verify that $(1,1,1)$ has the desired property $\cP$ as the base step. Then, assuming $\cP$ is satisfied by any Markov triple $\abc$ of (graph) distance $d$ away from $(1,1,1)$, we prove that $\cP$ holds for an elementary mutation of $\abc.$

The following result is a more precise formulation of Theorem \ref{thm:newt-pol-differ}. Set $\lc{z} := \sum_{i=1}^{n-2}z_i$.

\begin{prop}\label{prop:induction} Let $\abc$ be a Markov triple. Given a basis of $H_1(\cc{T}_\abc;\bZ)$ as in Remark \ref{rem:sol-mut-basis}, the following holds:
\begin{enumerate}[\normalfont i)]
    \item\label{ind-triangle}   $\newt(W_{\ol T_ \abc} - \lc{z})  $ is a triangle;
    \item\label{ind-eval} $W_{\ol T_\abc}(x,y,1,\dots,1) = W_\abc (x,y) + n-2$;
    \item\label{ind-tetrahedron} $\newt(W_{\ol T_\abc})$ is a simplex with one $2$-dimensional face given by $\newt(\ol{W}_{T_\abc}-\lc{z})$ and the other vertices being $e_3,\dots,e_n$. Moreover, they are at affine unit length from all other vertices;
    \item \label{ind-length} the affine lengths of the edges of the triangle $\newt(W_{\ol T_\abc} - \lc{z})  $ are $a, b$ and $c$.
\end{enumerate}

\end{prop}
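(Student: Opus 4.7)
The plan is to prove Proposition \ref{prop:induction} by induction on the graph distance from $(1,1,1)$ in the Markov tree, establishing the four properties (i)--(iv) simultaneously at each step.

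For the base case $\abc = (1,1,1)$, the lifted Clifford torus has disk potential
$$W_{\ol T_{(1,1,1)}}(x,y,z_1,\dots,z_{n-2}) = x + y + \lc z + \frac{1}{x y z_1 \cdots z_{n-2}}$$
by Example \ref{ex:newt-cliff}. All four properties are immediate by direct inspection: $\newt(W_{\ol T_{(1,1,1)}})$ is the $n$-simplex with vertices $e_1, e_2, -(e_1+\dots+e_n), e_3, \dots, e_n$, every edge is primitive (hence of affine length $1$), and setting $\vec z = \vec 1$ recovers $W_{T_{(1,1,1)}} + (n-2)$.

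For the inductive step, let $\abcprime$ be a Markov mutation of $\abc$. By Theorem \ref{thm:wallcrossing}, there are bases of $H_1(\ol T_\abc;\Z)$ and $H_1(\ol T_\abcprime;\Z)$ such that
$$W_{\ol T_\abc}(x, y, \vec z) = W_{\ol T_\abcprime}(x, y(1+x), \vec z).$$
Property (ii) for $\abcprime$ follows quickly from this: specializing $\vec z = \vec 1$ and applying the inductive (ii) gives $W_{\ol T_\abcprime}(x, y(1+x), \vec 1) = W_{T_\abc}(x,y) + (n-2)$; combining this with the 2D wall-crossing \eqref{eq:2dmut} $W_{T_\abc}(x, y) = W_{T_\abcprime}(x, y(1+x))$ and the substitution $y' = y(1+x)$ yields (ii) for $\abcprime$.

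For (i), (iii), and (iv), I would exploit both the wall-crossing and the inductive simplex structure. Since each monomial $z_i$ has zero $y$-degree, it is fixed by the mutation and persists as a monomial of $W_{\ol T_\abcprime}$; consequently $e_3, \dots, e_n$ remain vertices of $\newt(W_{\ol T_\abcprime})$. To understand the triangular face $T = \newt(W_{\ol T_\abc} - \lc z)$, one applies Lemma \ref{lem:observations}(b) to deduce that along $T$ the $\vec z$-exponents depend affinely on the $y$-exponent. This linear structure is preserved under the mutation $y \mapsto y(1+x)$, so the image face $T' = \newt(W_{\ol T_\abcprime} - \lc z)$ also lies in an affine 2-plane. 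Projecting $T'$ onto the $(x,y)$-plane yields $\newt(W_{T_\abcprime})$, which by Lemma \ref{lemm:viatrian} is a triangle with edges of affine lengths $a', b, c$. Combining this with the binomial edge coefficient structure of Lemma \ref{lem:binomedge} (which lifts to $\ol T_\abcprime$ via property (ii)) enables us to identify the vertices of $T'$ explicitly and verify that the edges from $e_3, \dots, e_n$ to all other vertices remain of unit affine length while the new triangle has edge lengths $a', b, c$.

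The main obstacle I anticipate is matching the affine edge lengths of the $n$-dimensional polytope $\newt(W_{\ol T_\abcprime} - \lc z)$ to those of its 2-dimensional projection $\newt(W_{T_\abcprime})$. A projection from $\R^n$ to $\R^2$ can send primitive vectors to non-primitive ones, so one must argue, using the explicit affine-linear formula for the $\vec z$-exponents along $T'$ given by Lemma \ref{lem:observations}(b), that the lifting from $\R^2$ to $\R^n$ carries primitive vectors to primitive vectors and thus preserves affine edge lengths.
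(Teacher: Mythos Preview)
Your overall inductive strategy matches the paper's, and your derivation of (ii) by specialising the wall-crossing at $\vec z = \vec 1$ is clean. However, there is a genuine gap in the step where you invoke Lemma \ref{lem:observations}\eqref{newt-linearity}. That lemma does not take as input merely that $\newt(W_{\ol T_\abc}-\lc z)$ is a triangle; its hypothesis is that the Newton polytope lies in an affine plane of the special form $w + \spn{v,e_1}$, equivalently that the $z_r$-exponent is constant along each fixed $y$-level. Inductive hypothesis (i) only tells you the triangle lies in \emph{some} affine $2$-plane, so the $z_r$-exponents are a priori affine functions of \emph{both} the $x$- and $y$-exponents, not of $y$ alone. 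Without the $e_1$-direction in the plane, the substitution $y\mapsto y(1+x)$ need not send the triangle to another triangle, and the rest of your argument does not go through.

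The paper supplies exactly this missing ingredient. Writing the top $y$-slice as $\wt C_M(x,\vec z) = \pm x^k\sum_j \binom{M}{j} x^j z_1^{k_{j1}}\cdots z_{n-2}^{k_{j(n-2)}}$ (using (ii), Lemma \ref{lem:binomedge}, and that this slice is an edge of the triangle), one gets $k_{jr} = \ell_r j + c_r$ linear in $j$, hence $\wt C_M = \pm x^k z^{\vec c}(1+x z_1^{\ell_1}\cdots z_{n-2}^{\ell_{n-2}})^M$. Now mutability in the chosen basis forces $(1+x)^M \mid \wt C_M$, which is only possible if every $\ell_r = 0$. This pins the top edge parallel to $e_1$, so the triangle sits in a plane $w + \spn{v,e_1}$, and only then does Lemma \ref{lem:observations}\eqref{newt-linearity} apply to yield the explicit formula $W_{\ol T_\abc} = \lc z + \sum_i y^i z^{f(i)} C_i(x)$ with $f$ linear; from there (i)--(iv) for $\abcprime$ follow. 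Regarding your anticipated obstacle on affine lengths: the paper sidesteps the primitivity-under-projection issue by observing, via (ii) and Lemma \ref{lem:binomedge}, that each lattice point on an edge of $\newt(W_{T_\abcprime})$ lifts to a unique lattice point on the corresponding edge of $\newt(W_{\ol T_\abcprime}-\lc z)$, so the lattice-point counts (and hence affine lengths) coincide.
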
 

\begin{proof} Abbreviate $W_\abc := W_{T_\abc}$ and $\newt(\abc) := \newt(W_{T_\abc})$ and similarly for $\ol T_\abc$.\\ By Example \ref{ex:newt-cliff} and Example \ref{ex:newt-chek}, this holds for $(1,1,1)$ and $(1,1,2)$. Let $\abcprime$ be a Markov triple at a distance $d+1$ from $(1,1,1)$ for $d \geq 1$. Assume it is connected to a triple $\abc$ at distance $d$ from $(1,1,1)$. Fix an admissible basis for $H_1(\cc{T}_\abc;\bZ)$ so that \eqref{ind-triangle}-\eqref{ind-length} hold for $\abc$. Write $W_{\ol \abc} = \sum_{i=m}^M y^i \wt C_i(x,z_1,\dots,z_{n-2})$, where $m < 0< M$ are the extremal degrees with which $y$ appears in $W_{\ol\abc}$. By \eqref{ind-triangle}
 and \eqref{ind-eval}, for each monomial $x^iy^j$ in $W_\abc$ there exists a unique $x^iy^jz^{k_{ij1}}_1\cdots z^{k_{ij(n-2)}}_{n-2}$ in $W_{\ol \abc}$ whose coefficient is given by the coefficient of $x^i y^i$ by \eqref{ind-eval}. The proof of Lemma \ref{lem:binomedge} shows that 
 $$\wt C_M(x,z_1,\dots,z_{n-2}) = \pm x^k\sum_{j=0}^{M} \binom{M}{j} x^jz^{k_{j1}}_1\cdots z^{k_{j(n-2)}}_{n-2}$$ 
 for some $k_{jr} := k_{Mjr}$. By \eqref{ind-eval} and the proof of Lemma \ref{lem:binomedge}, $\newt(W_{\ol\abc}-\lc{z}) \cap \{\textbf{y} = M \}$ is a line. 
 
In particular, $k_{jr}$ depends linearly on $j$, i.e. $k_{jr} = \ell_r j + c_r$ for some $\ell_r,c_r\in \bQ$. Hence
\begin{equation}\label{eq:blah}
    \wt C_M(x,z_1,\dots,z_{n-2}) = \pm x^kz_1^{c_1}\cdots z_{n-2}^{c_{n-2}}(1+xz_1^{\ell_1}\cdots z_{n-2}^{\ell_{n-2}})^M.
\end{equation}
    By our choice of basis, $W_{\ol \abc}$ is mutable, so $(1+x)^M$ divides $\wt C_M$. This implies that $\ell=0$ and $c_r \in \bZ$ . Thus $\newt(W_{\ol\abc}-\lc{z})$ is contained in the affine plane $(k,M,c_1, \dots, c_{n-2}) + \lspan{v,e_1}$ for some $v \in \bZ^n\sm\{0,e_1\}$. Lemma \ref{lem:observations}\eqref{newt-linearity} implies that the $\normalfont\textbf{z}_r$-coordinate of points in $\newt(W_{\ol \abc} -\lc{z})$ depends linearly on the $\normalfont\textbf{y}$-coordinate. Hence
    \begin{equation}\label{eq:fulldpformula}
        W_{\ol \abc} (x,y,z_1,\dots,z_{n-2}) = \lc{z} + \sum _ {i=  m }^M y^i z_1^{f_1(i)}\cdots z_{n-2}^{f_{n-2}(i)} C_i(x) 
    \end{equation} where $W_\abc(x,y) = \sum _ {i=  m }^M y^i  C_i(x),$ and each $f_r$ is a linear function. Equations \eqref{eq:fulldpformula} and \eqref{eq:wall-cross-lifts} imply that \eqref{ind-triangle} and \eqref{ind-eval} hold for $\abcprime$.
    As 
    $$\newt(W_{\ol\abcprime}) = \Conv(\{e_3,\dots,e_n,\Conv(\newt(W_{\ol \abcprime}-\lc{z})\})$$ 
    is Fano, the points $e_3,\dots,e_n$ are not contained in $\Conv(\newt(W_{\ol \abcprime}-\lc{z})$. This implies the first claim of \eqref{ind-tetrahedron}. To see the second claim, note that the vertices of $\newt(W_\abcprime)$ are primitive in $\bZ^2$ as $\newt(W_{\abcprime})$ is Fano. By \eqref{ind-eval}, this shows that any edge $e$ between $e_r$ and a vertex of $\newt(W_{\ol\abcprime}-\lc{z})$ is of the form $e = \{e_r+ t(v,v'): t \in [0,1]\}$, where $v$ is primitive in $\bZ^2$ and $v'\in \bZ^{n-2}$. Thus the affine length of $e$ is $1$, as is the affine length of the edge between $e_r$ and $e_{r'}$.
    
    Finally, by Lemma \ref{lem:binomedge} and \eqref{ind-eval}, any lattice point on an edge of $\newt(W_\abcprime)$ lifts to a unique lattice point on an edge of $\newt(W_{\ol\abcprime}-\lc{z})$. Therefore, the affine lengths of the edges of the two triangles agree. By Lemma \ref{lemm:viatrian}, this proves (\ref{ind-length}) and concludes the induction.
\end{proof}    

\subsection{Proof of Theorem \ref{thm:newt-pol-differ}} The first two assertions of Theorem \ref{thm:newt-pol-differ} are immediate from Proposition \ref{prop:induction}. We now show the last assertion. The \emph{boundary Maslov-2 convex hull} $\mho_{\ol T_\abc}$ of $\ol T_\abc$ is the convex hull of $\set{\del [u] \,|\, u \cl (\bD,S^1)\to (\bP^n,\ol T_\abc) \text{ holomorphic with } \mu(u) = 2}\sub \pi_1(\ol T_\abc)$. By \cite[Remark 4.5]{Via16} and the simply-connectedness of $\bP^n$, we can identify the Newton polytope of $W_{\ol T_\abc}$ with $\mho_{\ol T_\abc}$. As the latter is an invariant of the Lagrangian up to symplectomorphism by \cite[Corollary 4.3]{Via16}, we obtain that $\ol T_\abc$ is not symplectomorphic to $\ol T_\abcprime$ for $\abc \neq \abcprime$.

\bibliographystyle{amsplain}
\bibliography{bib}

\Addresses

\end{document}